\numberwithin{equation}{section}
\def\R{\mathbb R}
\def\Z{\mathbb Z}
\def\P{\mathbb P}
\def\E{\mathbb E}
\def\r{\right}
\def\l{\left }
\def\ee{\varepsilon}
\def\gcd{\operatorname{gcd}}
\def\rad{\operatorname{rad}}
\DeclarePairedDelimiter\ceil{\lceil}{\rceil}
\def\le {\leqslant}
\def\ge {\geqslant}
\newtheorem{theorem}{Theorem}[section]
\newtheorem{lemma}[theorem]{Lemma}
\newtheorem{proposition}[theorem]{Proposition}
\newtheorem{corollary}[theorem]{Corollary}
\theoremstyle{remark}
\newtheorem{remark}[theorem]{Remark}
\theoremstyle{definition}
\newtheorem{definition}[theorem]{Definition}
\newtheorem{question}[theorem]{Question}
\numberwithin{equation}{section}
\theoremstyle{remark}
\begin{document}
\title[Typical multiplicative functions over short moving intervals]{Partial sums of typical multiplicative functions over short moving intervals}
\author{Mayank Pandey}
\address{Department of Mathematics, Princeton University, Princeton, NJ, USA}
\email{mayankpandey9973@gmail.com}
\author{Victor Y. Wang}
\address{Department of Mathematics, Princeton University, Princeton, NJ, USA}
\address{Courant Institute, New York University, New York, NY, USA}
\email{vywang@alum.mit.edu}
\author{Max Wenqiang Xu}
\address{Department of Mathematics, Stanford University, Stanford, CA, USA}
\email{maxxu@stanford.edu}
\subjclass{Primary 11K65; Secondary 11D45, 11D57, 11D79, 11N37}
\keywords{Random multiplicative function, short moving intervals, multiplicative Diophantine equations, paucity, Gaussian behavior, correlations of divisor functions}


\begin{abstract}
We prove that the $k$-th positive integer moment of partial sums of Steinhaus random multiplicative functions over the interval $(x, x+H]$ matches the corresponding Gaussian moment, as long as $H\ll x/(\log x)^{2k^2+2+o(1)}$ and $H$ tends to infinity with $x$. We show that properly normalized partial sums of typical multiplicative functions arising from realizations of random multiplicative functions have Gaussian limiting distribution in short moving intervals $(x, x+H]$ with $H\ll X/(\log X)^{W(X)}$ tending to infinity with $X$, where $x$ is uniformly chosen from $\{1,2,\dots, X\}$, and $W(X)$ tends to infinity with $X$ arbitrarily slowly. This makes some initial progress on a recent question of Harper.
\end{abstract}

\maketitle

\section{Introduction}
We are interested in the partial sums behavior of a family of completely multiplicative functions $f$ supported on moving short intervals.
Formally, for positive integers $X$, let $[X] := \{1,2,\dots, X\}$ and
\[\mathcal{F}_X :=
\{ f: [X]\to \{|z|=1\},\; \text{$f$ is completely multiplicative}  \}. \]
For $f\in \mathcal{F}_X$, the function values $f(n)$ for all $n\le X$ are uniquely determined by $(f(p))_{p\le X}$.
The Steinhaus random multiplicative function is defined by selecting $f(p)$ uniformly at random from the complex unit circle and defining $f(n)$ completely multiplicatively. One may view $\mathcal{F}_X$ as the family of all Steinhaus random multiplicative functions. 

Let $H$ be another positive integer.
We are interested in for a typical $f\in \mathcal{F}_{X+H}$, whether the random partial sums
\begin{equation}
\label{EQN:notation-A_H(f,x)-for-normalized-partial-sum}
    A_H(f, x) := \frac{1}{\sqrt{H}}\sum_{x<n \le x+H} f(n),
\end{equation}
where $x$ is uniformly chosen from $[X]$, behave like a complex standard Gaussian.
In this note, we provide a positive answer (Theorem~\ref{thm: main}) when $H\ll_A X/\log^AX$ holds for all $A > 0$.
As we explain in \S\ref{sec: end}, the answer is negative for 
$H\gg X\exp(-(\log\log X)^{1/2 - \ee})$, 
but the question remains open between these two thresholds.

We formalize the question by explaining how to measure the elements in $\mathcal{F}_X$.
Via complete multiplicativity of $f\in \mathcal{F}_X$, define on $\mathcal{F}_X$ the product measure
\[\nu_X:= \prod_{p\le X}\mu_p,\]
where for any given prime $p$, we let $\mu_p$ denote the uniform distribution on the set $\{f(p)\} = \{|z|=1\}$.
For example, $\nu_X(\mathcal{F}_X) =1$.

\begin{question}[{Harper\cite[open question (iv)]{Harper-moving}}]\label{question}
What is the distribution of the normalized random sum defined in \eqref{EQN:notation-A_H(f,x)-for-normalized-partial-sum} (for most $f$) as $x$ is uniformly chosen from $[X]$?
\end{question}

\subsection{Main results}
In this note, we make some progress on Question~\ref{question}.
We use the notation $\xrightarrow{d}$ to denote convergence in distribution. 
\begin{theorem}\label{thm: main}
Let integer $X$ be large and $W(X)$ tend to infinity arbitrarily slowly as $X$ tends to infinity. Let $H: =  H(X) \ll X(\log X)^{-W(X)}$ and $H\to +\infty$ as $X \to +\infty$.
Then, for almost all $f\in \mathcal{F}_{X+H}$, as $X\to +\infty$,
\begin{equation}
\label{EQN:short-interval-statistic-converges-in-distribution-to-Gaussian}
\frac{1}{\sqrt{H}} \sum_{x<n \le x+H} f(n)  \xrightarrow{d} \mathcal{CN}(0,1),
\end{equation}
where $x$ is chosen uniformly from $[X]$. 
\end{theorem}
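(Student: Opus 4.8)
The natural approach is the method of moments. Writing $\nu=\prod_p\mu_p$ for the product measure on completely multiplicative $f\colon\N\to\{|z|=1\}$ (only the values $f(n)$ with $n\le X+H$ enter the quantity below, so working with this single measure is compatible with the statement about $\mathcal{F}_{X+H}$), and using that the standard complex Gaussian has moments $\E[Z^a\overline{Z}^{\,b}]=a!\,\mathbbm{1}_{a=b}$ and is determined by them (Carleman's condition holds), it suffices to show that for each fixed pair of non-negative integers $(a,b)$ and for $\nu$-almost every $f$,
\[
M_{a,b}(f,X) \;:=\; \frac1X\sum_{x\le X} A_H(f,x)^a\,\overline{A_H(f,x)}^{\,b} \;\longrightarrow\; a!\,\mathbbm{1}_{a=b}\qquad(X\to\infty),
\]
with $A_H(f,x)$ as in \eqref{EQN:notation-A_H(f,x)-for-normalized-partial-sum}; intersecting the resulting countably many full-measure events then yields \eqref{EQN:short-interval-statistic-converges-in-distribution-to-Gaussian}. (One may assume throughout that $H=H(X)$ is slowly varying.)

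First I would compute the average of $M_{a,b}$ over $f$. Expanding by complete multiplicativity and using $\E_\nu[f(r)\overline{f(s)}]=\mathbbm{1}_{r=s}$ for positive integers $r,s$, one finds that $\E_\nu[M_{a,b}(f,X)]$ equals $X^{-1}H^{-(a+b)/2}$ times the number, summed over $x\le X$, of solutions of the multiplicative Diophantine equation $n_1\cdots n_a=m_1\cdots m_b$ with $n_1,\dots,n_a,m_1,\dots,m_b\in(x,x+H]$. If $a\ne b$, say $a<b$, then for $x\ge\max(H,2^a+1)$ one has $n_1\cdots n_a\le(2x)^a<x^b\le m_1\cdots m_b$, so there are no solutions, while the $O(H)$ remaining terms contribute $\ll(H/X)(\log X)^{O_{a,b}(1)}=o(1)$ by the classical bound $\sum_{n\le Y}d_\ell(n)^2\ll_\ell Y(\log Y)^{\ell^2-1}$ together with the hypothesis $H\ll X(\log X)^{-W(X)}$, $W(X)\to\infty$. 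If $a=b=k$, the \emph{diagonal} solutions --- those with $(m_1,\dots,m_k)$ a permutation of $(n_1,\dots,n_k)$ --- number $k!\,H^k+O_k(H^{k-1})$ for each $x$, hence contribute $k!+o(1)$; the remaining \emph{off-diagonal} solutions of $n_1\cdots n_k=m_1\cdots m_k$ with all variables in $(x,x+H]$ but $\{m_j\}\ne\{n_i\}$ as multisets must be shown to contribute $o(1)$ after summing over $x\le X$. This is precisely a paucity statement for this equation over short intervals, which I would take as input: an estimate, uniform in $H$ over the allowed range and with a saving of a power of $\log X$, proved via bounds for correlations of the generalized divisor functions $d_k$ in short intervals. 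This yields $\E_\nu[M_{a,b}(f,X)]\to a!\,\mathbbm{1}_{a=b}$.

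Next I would bound the variance. Expanding $\E_\nu|M_{a,b}(f,X)|^2$ and again using orthogonality of the $f(p)$, one is led to count solutions of $n_1\cdots n_a\,m'_1\cdots m'_b=m_1\cdots m_b\,n'_1\cdots n'_a$ with the unprimed variables in $(x,x+H]$ and the primed ones in $(y,y+H]$, averaged over $x,y\le X$; the configurations in which the $x$-variables and the $y$-variables each form a permutation pattern reproduce $|\E_\nu M_{a,b}(f,X)|^2$, and the remaining ``mixed'' configurations --- the analogous equation in a pair of short intervals, or in a single interval of length $O(H)$ when $|x-y|\le H$ --- are controlled by the same circle of paucity estimates, giving $\mathrm{Var}_\nu[M_{a,b}(f,X)]=o(1)$, and indeed eventually below $(\log X)^{-c}$ for every fixed $c$ since $W(X)\to\infty$. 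Thus $M_{a,b}(f,X)\to a!\,\mathbbm{1}_{a=b}$ in $L^2(\nu)$, hence in $\nu$-probability. To upgrade this to $\nu$-almost-sure convergence I would apply the Borel--Cantelli lemma along a sparse subsequence $X_j$ with $\log X_j\asymp\sqrt j$ (so that $\sum_j\mathrm{Var}_\nu[M_{a,b}(f,X_j)]<\infty$ and $X_{j+1}/X_j\to1$), obtaining almost-sure convergence along $X_j$, and then interpolate to all $X$ using monotonicity in $X$ of $\sum_{x\le X}|A_H(f,x)|^{2k}$ for fixed $H$, a mean-square comparison of $A_{H(X)}(f,x)$ with $A_{H(X_j)}(f,x)$ (negligible when $H$ is slowly varying and $X_{j+1}/X_j\to1$), and a crude almost-sure bound on $\frac1X\sum_{x\le X}|A_H(f,x)|^{2\max(a,b)}$ obtained from the same moment-plus-Borel--Cantelli device.

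The step I expect to be the main obstacle is the paucity input: showing that the number of non-permutation solutions of $n_1\cdots n_k=m_1\cdots m_k$ (and of its two-interval analogue) with all variables confined to $(x,x+H]$ beats the trivial count $\asymp H^k$ by a power of $\log$, \emph{uniformly} as $H$ ranges over essentially the whole scale up to $X(\log X)^{-W(X)}$. For $H$ a fixed power of $x$ this is close to known estimates for shifted convolutions of divisor functions, but the required uniformity is delicate: as $H$ approaches $x$ the paucity saving degrades, and in fact for $H\asymp x$ the off-diagonal count already \emph{exceeds} the diagonal one --- which is exactly why the answer to Question~\ref{question} is negative in that range --- so the precise hypothesis $H\ll X(\log X)^{-W(X)}$ with $W(X)\to\infty$ is what guarantees that, for each fixed $k$, there is eventually a factor $(\log X)^{W(X)}$ of room to overcome that loss. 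Making all of this quantitative, via estimates for correlations of the divisor functions $d_k$ over short intervals, is the technical heart of the argument.
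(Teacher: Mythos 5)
Your overall skeleton is the same as the paper's: method of moments, applied by computing the $f$-mean and $f$-variance of the empirical moments $\E_x\bigl[A_H(f,x)^a\overline{A_H(f,x)}^{\,b}\bigr]$, which by orthogonality reduce to counting solutions of $n_1\cdots n_a=m_1\cdots m_b$ in one short interval (for the mean) and of the corresponding ``cross'' equation in a pair of short intervals (for the variance), with the unbalanced case $a\ne b$ handled by size considerations. The genuine gap is that you take the counting estimates as input, and these are the entire technical content of the result. The one-interval paucity statement, uniformly in $H$ up to $X(\log X)^{-W(X)}$, is Theorem~\ref{thm: high moments}, proved via the observation $n_{2k}\mid(n_1-n_{2k})\cdots(n_k-n_{2k})$ together with Shiu's and Henriot's bounds on (correlations of) $\tau_k$ in short intervals; more seriously, the two-interval count needed for your variance is \emph{not} an off-the-shelf shifted-divisor correlation estimate: the paper has to prove it as Lemma~\ref{LEM:avg-cross-count}, which requires new ingredients (the symmetry trick of pivoting on the variable maximizing $\tau_{2k}$, the weight $\tau_{2k}(n_{2k})^{2k}$ on the fibers, the function $\rad_k$, and Lemmas~\ref{LEM:easy-polynomial-problem}--\ref{Lem: average yx}, exploiting the average over the interval endpoints). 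Deferring these is deferring the proof.

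Your final almost-sure upgrade is also off target and, as sketched, incorrect. The theorem's ``almost all'' (see the paper's footnote) only asks, for each $X$, for a set $\mathcal{G}_{X,H}\subseteq\mathcal{F}_{X+H}$ of $\nu_{X+H}$-measure $1-o(1)$ on which the moments up to a slowly growing order $V(X)$ are close to Gaussian; this follows from the variance bounds by Chebyshev/Markov alone, with no Borel--Cantelli and no chaining between scales. Your stronger $\nu$-a.s. claim does not follow from your own estimates: the variance of $\E_x|A_H(f,x)|^{2k}$ is only $O_k\bigl(H^{-1}+H(\log X)^{O_k(1)}/X\bigr)$, and the $H^{-1}$ term (coming from the trivial/diagonal solutions) is not improved by $W(X)\to\infty$; since $H$ may tend to infinity arbitrarily slowly, the variance is \emph{not} eventually below $(\log X)^{-c}$ and need not be summable along $\log X_j\asymp\sqrt{j}$, so the Borel--Cantelli step fails in the stated generality. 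Likewise, ``one may assume $H(X)$ is slowly varying'' is not a legitimate reduction for an arbitrary admissible $H(X)$, and your interpolation between the scales $X_j$ leans on it. The fix is simply to argue as the paper does: apply Chebyshev at each $X$ with $k\le V(X)$ for a suitable $V(X)\to\infty$ chosen in terms of $W(X)$, and invoke the method of moments for the resulting sequences $f_X\in\mathcal{G}_{X,H}$.
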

Here \enquote{almost all} means the total measure of such $f$ is $1-o_{X\to +\infty}(1)$ under $\nu_{X+H}$.\footnote{More precisely, there exist nonempty measurable sets $\mathcal{G}_{X, H}\subseteq \mathcal{F}_{X+H}$ of measure $1-o_{X\to +\infty}(1)$ (under $\nu_{X+H}$) such that for every sequence of functions $f_X\in \mathcal{G}_{X, H}$ ($X\ge 1$), the random variable on the left-hand side of \eqref{EQN:short-interval-statistic-converges-in-distribution-to-Gaussian} (with $f = f_X$) converges in distribution to $\mathcal{CN}(0,1)$ as $X\to +\infty$.}
Also, $\mathcal{CN}(0,1)$ denotes the standard complex normal distribution;
a standard complex normal random variable $Z$ (with mean $0$ and variance $1$) can be written as $Z=X+iY$, where $X$ and $Y$ are independent real normal random variables with mean $0$ and variance $1/2$.
Recall that a real normal random variable $W$ with mean $0$ and variance $\sigma^{2}$ satisfies
	\[\P(W\le t) = \frac{1}{\sigma\sqrt{2\pi}} \int_{-\infty}^{t}e^{-\frac{x^{2}}{2\sigma^{2}}}dx . \]


To prove Theorem~\ref{thm: main}, we establish moment statistics in several situations.
We first show that the integer moments of random multiplicative functions $f$ supported on suitable short intervals match the corresponding Gaussian moments.
We write $\E_f$ to mean ``average over $f\in \mathcal{F}_X$ with respect to $\nu_X$'' (where $\mathcal{F}_X$ is always clear from context).


\begin{theorem}\label{thm: high moments}
Let $x,H,k\ge 1$ be integers.
Let $f\in \mathcal{F}_{x+H}$.
Let $E(k) = 2k^2+2$.
Then
\[
\E_f \biggl|\frac{1}{\sqrt{H}}\sum_{x<n\le x+H}f(n)\biggr|^{2k}
= k! + O_k\biggl(H^{-1}
+ \frac{H^{1/2}}{\max(x, H)^{1/2}}
+ \frac{H\cdot (\log{x} + \log{H})^{E(k)}}{\max(x, H)}\biggr),
\]
with an implied constant depending only on $k$.
\end{theorem}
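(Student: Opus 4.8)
The plan is to expand the $2k$-th moment, apply the orthogonality relation for Steinhaus random multiplicative functions, and thereby reduce to counting integer solutions of the multiplicative equation $n_1\cdots n_k=m_1\cdots m_k$. Writing $S:=\sum_{x<n\le x+H}f(n)$, the identity $\E_f\big[\prod_{i=1}^{k}f(n_i)\prod_{j=1}^{k}\overline{f(m_j)}\big]=\mathbbm 1[n_1\cdots n_k=m_1\cdots m_k]$ (valid since $\E_f[f(p)^a]=\mathbbm 1[a=0]$ for $a\in\Z$) gives
\[
\E_f|S|^{2k}=Q(x,H,k):=\#\bigl\{(n_i)_{i\le k},(m_j)_{j\le k}\in(x,x+H]^{2k}:\ n_1\cdots n_k=m_1\cdots m_k\bigr\}.
\]
The \emph{diagonal} part — solutions for which $(m_j)$ is a permutation of $(n_i)$ — equals $k!H^k+O_k(H^{k-1})$: a $k$-tuple with pairwise distinct entries admits exactly $k!$ matchings, while ordered tuples with a repeated entry number only $O_k(H^{k-1})$ and each perturbs the count by $O_k(1)$. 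Dividing by $H^k$ produces the main term $k!$ and the error term $O_k(H^{-1})$.

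It remains to bound the number $Q_{\mathrm{off}}$ of non-diagonal solutions. If $H\ge x$, every variable lies in $(x,2H]$, so $Q(x,H,k)\le\sum_{N\le(2H)^k}\tau_k(N)^2\ll_k H^k(\log H)^{k^2-1}\le H^k(\log H)^{E(k)}$; combined with the lower bound $\E_f|S|^{2k}\ge k!+O_k(H^{-1})$ from the diagonal, this settles the theorem in this range (where $\max(x,H)=H$). Henceforth assume $H<x$, so $\max(x,H)=x$, and we must show
\[
Q_{\mathrm{off}}\ll_k H^{k+1/2}x^{-1/2}+H^{k+1}(\log x)^{E(k)}x^{-1}.
\]
Cancelling the common part of the multisets $\{n_i\}$ and $\{m_j\}$ reduces this to counting \emph{primitive} solutions of length $\ell$, for $2\le\ell\le k$: ordered tuples $(n_i)_{i\le\ell},(m_i)_{i\le\ell}\in(x,x+H]^{2\ell}$ with $n_1\cdots n_\ell=m_1\cdots m_\ell$ and $\{n_i\}\cap\{m_i\}=\emptyset$ as multisets. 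A short combinatorial argument yields $Q_{\mathrm{off}}\ll_k\sum_{\ell=2}^{k}H^{k-\ell}P(\ell)$, where $P(\ell)$ is the number of such primitive solutions, so it suffices to prove $P(\ell)\ll_k H^{\ell+1/2}x^{-1/2}+H^{\ell+1}(\log x)^{E(k)}x^{-1}$.

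For a primitive solution set $g_i:=\gcd(n_i,m_i)$, $u_i:=n_i/g_i$, $v_i:=m_i/g_i$, so $\gcd(u_i,v_i)=1$. Primitivity forces $n_i\ne m_i$, hence $u_i\ne v_i$, and from $g_iu_i,g_iv_i\in(x,x+H]$ together with $g_iu_i>x$ one gets $g_i|u_i-v_i|\le H$ and $|u_i-v_i|\le Hu_i/x$, as well as $u_i,v_i\gg x/H$. Hence
\[
P(\ell)\ \le\!\!\sum_{\substack{u_i,v_i\ \text{coprime},\ 1\le|u_i-v_i|\le Hu_i/x\\ u_1\cdots u_\ell=v_1\cdots v_\ell}}\ \prod_{i=1}^{\ell}\#\bigl\{g_i\ge 1:\ g_iu_i,g_iv_i\in(x,x+H]\bigr\},
\]
and the $i$-th inner factor is at most $1+L_i$, where $L_i\le H/\max(u_i,v_i)$ is the length of the (possibly empty) admissible interval for $g_i$; crucially $L_i$ carries a subtracted term involving $x|u_i-v_i|$, and it is this subtraction that furnishes the key savings — for $\ell=2$ this is the classical short-interval count for $n_1n_2=m_1m_2$, which gives $P(2)\ll H^3x^{-1}\log x$. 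The heart of the proof is to estimate this restricted multiplicative sum for general $\ell$: one must exploit the relation $u_1\cdots u_\ell=v_1\cdots v_\ell$ together with the multiplicative closeness $|1-v_i/u_i|\le H/x$ (equivalently $\sum_i\log(v_i/u_i)=0$ with each summand $O(H/x)$) to beat the trivial bound by a power of $x$, for instance by fixing the common product $M=\prod u_i=\prod v_i$, controlling the admissible factorizations via $\sum_M\tau_\ell(M)^2$ and $\sum_M\tau_\ell(M)$ over the pertinent ranges, and bookkeeping the closeness restriction. The powers of $\log x$ amassed from these divisor sums are precisely what produce the exponent $E(k)=2k^2+2$, while the term $H^{k+1/2}x^{-1/2}$ reflects the boundary (diagonal-type) error in counting lattice points in the very short $g_i$-intervals.

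I expect this last step — a genuine paucity/power-saving bound for the restricted system $u_1\cdots u_\ell=v_1\cdots v_\ell$ with all quotients near $1$, uniformly in $2\le\ell\le k$ — to be the main obstacle. The case $\ell=2$ is the well-understood short-interval divisor equation; the difficulty for larger $\ell$ is that the GCD/quotient combinatorics of an equality between two products of $\ell$ integers is considerably more involved, and the count must be organised so that the relation $\prod u_i=\prod v_i$ — not merely the ranges of the variables — is what is actually being used.
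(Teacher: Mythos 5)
Your reduction via orthogonality, the diagonal count $k!H^k+O_k(H^{k-1})$, and the disposal of the range $H\ge x$ by a trivial $\sum_{N}\tau_k(N)^2$ bound are all correct and match the paper's starting point. But the proposal stops exactly where the theorem actually lives: the bound $P(\ell)\ll_k H^{\ell+1/2}x^{-1/2}+H^{\ell+1}(\log x)^{E(k)}x^{-1}$ for primitive off-diagonal solutions is asserted, not proved, and you say yourself that it is ``the main obstacle.'' Moreover, the route you sketch for it is unlikely to work as stated: after passing to $g_i=\gcd(n_i,m_i)$, $u_i=n_i/g_i$, $v_i=m_i/g_i$, fixing the common product $M=\prod u_i=\prod v_i$ and invoking $\sum_M\tau_\ell(M)^2$ or $\sum_M\tau_\ell(M)$ uses only the ranges of the variables, not the closeness constraints $|u_i-v_i|\le Hu_i/x$; since $M$ ranges over a set of size about $x^\ell$, such divisor-sum bookkeeping gives bounds that are larger than $P(\ell)$ needs to be by a power of $x$, and no mechanism in your outline recovers that power saving. (For $\ell=2$ the classical argument works precisely because, with $u_1v_2=u_2v_1$ and coprimality, the pair $(u_1,v_1)$ determines the shape of all solutions and the difference constraint can be fed in directly; nothing of that sort is set up for $\ell\ge 3$.)

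For comparison, the paper extracts the saving from a different structural observation: for a nontrivial solution with $n_{2k}\notin\{n_1,\dots,n_k\}$ one has $n_{2k}\mid(n_1-n_{2k})\cdots(n_k-n_{2k})$, where each difference $h_i=n_i-n_{2k}$ is nonzero and of size at most $H$, so a single variable of size about $x$ must divide a product of small shifted differences. When $H\le x^{1-\ee/k}$ this already yields a power saving by elementary divisor bounds (the count of $(h_1,\dots,h_k,y)$ with $y\mid h_1\cdots h_k$, $y\in(x,x+H]$, is $\ll H^{k+1}x^{-1+o(1)}$), giving the $H^{1/2}/\max(x,H)^{1/2}$ term. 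When $H$ is close to $x$, the paper writes $n_{2k}=u_1\cdots u_k$ with $u_i\mid h_i$, $h_i=\ee_iu_iv_i$, decomposes dyadically, and bounds the resulting correlation sums of $\tau_k$ over short intervals and along linear shifts using Shiu's theorem and Henriot's bound; this is where the exponent $E(k)=2k^2+2$ is actually produced. Your proposal would need a genuine substitute for this second stage (or for the divisibility observation itself) before it can be considered a proof.
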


Notice that $k!$ is the $2k$-th moment of the standard complex Gaussian distribution.
Given an integer $k\ge 1$, let $E'(k)$ be the smallest real number $r\ge 0$ such that for every $\ee>0$, we have $\E_f |A_H(f,x)|^{2k} \to k!$ whenever
\begin{equation*}
x\to +\infty
\qquad\textnormal{and}\qquad
(\log{x})^\ee \le H \le x/(\log x)^{r+\ee}.
\end{equation*}
Theorem~\ref{thm: high moments} shows that $E'(k)\le E(k)$.\footnote{After writing the paper, the authors learned that for $H\le x/\exp(C_k\log{x}/\log\log{x})$, the Diophantine statement underlying Theorem~\ref{thm: high moments} has essentially appeared before in the literature (see \cite[proof of Theorem~34]{Bourgain2014}).
However, we handle a more delicate range of the form $H\le x/(\log x)^{Ck^2}$.}
The paper \cite{CS} studies the case $k=2$, showing in particular that $E'(2)\le 1$.
In the case that $f$ is supported on $\{1,2\dots, x\}$, the $2k$-th moments for general $k$ were studied in \cite{batyrev1995manin, Breteche, breteche2001toric, GS2001,   HeapLind,  Harperhigh, HarperHelson} and it is known that the moments there do not match Gaussian moments:
for instance, by \cite[Theorem~1.1]{Harperhigh}, there exists some constant $c>0$ such that for all positive integers $k\le c \frac{\log x}{\log \log x}$ (assuming $x$ is large),
\begin{equation}\label{eqn: high}
    \E_f \biggl|\frac{1}{\sqrt{x}} \sum_{n\le x} f(n)\biggr|^{2k} = e^{-k^{2}\log(k\log(2k)) + O(k^{2})}  (\log x)^{(k-1)^{2}} .
\end{equation}
While \eqref{eqn: high} is quite uniform over $k$, it is unclear how uniform in $k$ one could make our Theorem~\ref{thm: high moments}.
(See Remark~\ref{RMK:k-uniformity} for some discussion of the $k$-aspect in our work.)

\begin{remark}
The powers of $\log x$ above are significant.
For instance, Theorem~\ref{thm: high moments} in the range $H\gg x$ follows directly from \eqref{eqn: high}, since $(k-1)^2\le E(k)$.
One may also wonder how far our bound $E'(k)\le E(k)$ is from the truth.
Based on a circle method heuristic for \eqref{eqn: counting} along the lines of \cite[Conjecture~2]{hooley1986some}, with a Hardy-Littlewood contribution on the order of $\frac{H^{2k}}{Hx^{k-1}} (\log{x})^{(k-1)^2}$, and an additional contribution of roughly $k!H^k$ from trivial solutions,
it is plausible that one could improve the right-hand side in Theorem~\ref{thm: high moments} to $k! + O_k(\frac{H^{k-1}}{x^{k-1}}(\log{x})^{(k-1)^2})$ for $H\in [x^{1-\delta}, x]$.
If true, this would suggest that $E'(k)\le k-1$ and we believe this might be the true order. 
For a discussion of how one might improve on Theorem~\ref{thm: high moments}, see the beginning of \S\ref{sec: end}.
\end{remark}

By orthogonality, Theorem~\ref{thm: high moments} is a statement about the Diophantine point count
\begin{equation}\label{eqn: counting}
    \#\{(n_1, n_2,\dots, n_{2k})\in (x, x+H]^{2k}: n_1n_2\cdots n_k = n_{k+1}n_{k+2}\cdots n_{2k}\}.
\end{equation}
The circle method, or modern versions thereof such as \cite{duke1993bounds,heath1996new},
might lead to an asymptotic for \eqref{eqn: counting} uniformly over $H\in [x^{1-\delta}, x]$
for $k=2$,  unconditionally (cf.~\cite[Theorem~6]{heath1996new}),
or for $k=3$, conditionally on standard number-theoretic hypotheses (cf.~\cite{wang2021_HLH_vs_RMT}).
Alternatively, ``multiplicative'' harmonic analysis along the lines of \cite{Breteche,HarperHelson,HeapLind}
may in fact lead to an unconditional asymptotic over $H\in [x^{1-\delta}, x]$ for all $k$, with many main terms involving different powers of $\log{x}, \log{H}$.
Nonetheless, for all $k$, we obtain an unconditional asymptotic for \eqref{eqn: counting} uniformly over $H\ll x/(\log x)^{Ck^{2}}$,
by replacing the complicated ``off-diagonal'' contribution to \eqref{eqn: counting} with a \emph{larger but simpler} quantity;
see \S\ref{SEC:high-moments-proof} for details.

\begin{remark}
An analog of \eqref{eqn: counting} for polynomial values $P(n_i)$ is studied in \cite{KSX, wangxu},
and a similar flavor counting question to \eqref{eqn: counting} is studied in \cite{FGM} using the decoupling method.
\end{remark}

After Theorem~\ref{thm: high moments},
our next step towards Theorem~\ref{thm: main} is to establish concentration estimates for the moments of the random sums \eqref{EQN:notation-A_H(f,x)-for-normalized-partial-sum}.
We write $\E_x$ to denote ``expectation over $x$ uniformly chosen from $[X]$'' (where $X$ is always clear from context).

\begin{theorem}\label{thm: Steinhaus}
Let $X,k\ge 1$ be integers with $X$ large.
Suppose that 
$H:=H(X)\to +\infty $ as $X\to +\infty$.
There exists a large absolute constant $A>0$ such that the following holds as long as $H\ll X(\log X)^{-C_k}$ with $C_k = A k^{A k^{A k}}$.
Let $f\in \mathcal{F}_{X+H}$;
then
\begin{equation}\label{eqn: 2k}
     \E_f \biggl( \E_x \biggl|\frac{1}{\sqrt{H}}\sum_{x<n \le x+H} f(n)\biggr|^{2k} -k!\biggr)^{\!2}
     = o_{X\to +\infty}(1).
\end{equation}
Furthermore, for any fixed positive integer $\ell < k$, we have
\begin{equation}\label{eqn: kl}
    \E_f \biggl|\E_x \biggl(\frac{1}{\sqrt{H}}\sum_{x<n \le x+H} f(n)\biggr)^{\!k} \biggl(\frac{1}{\sqrt{H}}\sum_{x<n \le x+H} \overline{f(n)}\biggr)^{\!\ell}\, \biggr|^{2}
    = o_{X\to +\infty}(1).
\end{equation}
\end{theorem}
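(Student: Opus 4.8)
The plan is to reduce both \eqref{eqn: 2k} and \eqref{eqn: kl} to counting solutions of multiplicative Diophantine equations over \emph{pairs} of short intervals $(x_1,x_1+H]$, $(x_2,x_2+H]$ with $x_1,x_2\in[X]$, and then to run these through the machinery developed in \S\ref{SEC:high-moments-proof}. Throughout I would use Fubini together with Steinhaus orthogonality, which says that for positive integers $a_1,\dots,a_r,b_1,\dots,b_r\le X+H$ one has $\E_f\bigl(\prod_{i\le r}f(a_i)\bigr)\overline{\prod_{j\le r}f(b_j)} = \mathbbm{1}[a_1\cdots a_r = b_1\cdots b_r]$. Write $T(f):=\E_x|A_H(f,x)|^{2k}$. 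For \eqref{eqn: 2k} I would first record that $\E_f T = \E_x\E_f|A_H(f,x)|^{2k} = k!+o(1)$: Theorem~\ref{thm: high moments} gives this with an error which, averaged over $x\in[X]$, is $O_k\bigl(H^{-1}+H^{1/2}X^{-1/2}+H(\log X)^{E(k)+1}X^{-1}\bigr)$, and this is $o(1)$ once $H\ll X(\log X)^{-C_k}$ with $C_k\ge E(k)+2$ (our $C_k$ is far larger). Consequently
\[
\E_f(T-k!)^2 = (\E_f T - k!)^2 + \bigl(\E_f T^2 - (\E_f T)^2\bigr) = o(1) + \frac{1}{X^2 H^{2k}}\sum_{x_1,x_2\le X}N_{\mathrm{off}}(x_1,x_2),
\]
where, by orthogonality, $N_{\mathrm{off}}(x_1,x_2)$ counts $\mathbf n\in(x_1,x_1+H]^{2k}$, $\mathbf m\in(x_2,x_2+H]^{2k}$ with $n_1\cdots n_k\,m_1\cdots m_k = n_{k+1}\cdots n_{2k}\,m_{k+1}\cdots m_{2k}$ but with $n_1\cdots n_k\ne n_{k+1}\cdots n_{2k}$ (equivalently $m_1\cdots m_k\ne m_{k+1}\cdots m_{2k}$): solutions in which both blocks balance separately as integers contribute exactly $(\E_f T)^2$ to $\E_f T^2$ and are removed by the subtraction. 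Thus \eqref{eqn: 2k} is equivalent to $\sum_{x_1,x_2\le X}N_{\mathrm{off}}(x_1,x_2) = o(X^2 H^{2k})$.

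Unfolding \eqref{eqn: kl} the same way yields, for $\ell<k$, that the left-hand side of \eqref{eqn: kl} equals $(X^2H^{k+\ell})^{-1}\sum_{x_1,x_2\le X}N'(x_1,x_2)$, where $N'(x_1,x_2)$ counts $\mathbf n\in(x_1,x_1+H]^{k+\ell}$, $\mathbf m\in(x_2,x_2+H]^{k+\ell}$ solving $n_1\cdots n_k\,m_{k+1}\cdots m_{k+\ell} = n_{k+1}\cdots n_{k+\ell}\,m_1\cdots m_k$. Here there is \emph{no} main term: since $k\ne\ell$, a product of $k$ integers of size $\asymp x_1$ cannot equal a product of $\ell$ integers of size $\asymp x_1$, so no block-diagonal solution exists; moreover comparing the sizes of the two sides ($\asymp x_1^k x_2^\ell$ versus $\asymp x_1^\ell x_2^k$) forces $|x_1/x_2-1|\ll_k H/\min(x_1,x_2)$, so $N'(x_1,x_2)=0$ unless $x_1$ and $x_2$ are ``almost equal'' and the $(x_1,x_2)$-sum effectively runs over an essentially one-dimensional set. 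The target is $\sum_{x_1,x_2\le X}N'(x_1,x_2) = o(X^2 H^{k+\ell})$.

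Both point counts would be estimated essentially as in the proof of Theorem~\ref{thm: high moments}, by replacing the genuine off-diagonal contribution with a larger but simpler majorant. For $N_{\mathrm{off}}$ the structural step is: when $n_1\cdots n_k\ne n_{k+1}\cdots n_{2k}$, put $d=\gcd(n_1\cdots n_k, n_{k+1}\cdots n_{2k})$ and write $n_1\cdots n_k = d\alpha$, $n_{k+1}\cdots n_{2k}=d\beta$ with $(\alpha,\beta)=1$ and $\alpha\ne\beta$; the equation then forces $m_1\cdots m_k=\beta e$ and $m_{k+1}\cdots m_{2k}=\alpha e$ for some $e\ge1$, while the short-interval constraints force $\alpha/\beta = 1+O_k(H/x_1)$, so $\alpha,\beta\gg_k x_1/H$ are \emph{large}. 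This exhibits $N_{\mathrm{off}}(x_1,x_2)$ as a sum, over coprime $\alpha\ne\beta$ and over $d,e$, of products $r_k(d\alpha;x_1)\,r_k(d\beta;x_1)\,r_k(\beta e;x_2)\,r_k(\alpha e;x_2)$ of restricted factorization counts $r_k(P;x):=\#\{\mathbf a\in(x,x+H]^k:a_1\cdots a_k=P\}$ — exactly the type of sum over divisor-function-like quantities handled in \S\ref{SEC:high-moments-proof}, now carrying a second interval parameter and the coprime shift $\alpha/\beta$. Majorizing these counts as there and summing the resulting divisor-moment bound over $x_1,x_2\in[X]$ produces a saving of a fixed (large) power of $\log X$, so $H\ll X(\log X)^{-C_k}$ closes the estimate; the accumulation of roughly one power of $\log$ per variable per level of the decomposition is the source of the tower-type size of $C_k$. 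The bound for $N'$ is analogous but lighter: introducing the ratio $q=(n_1\cdots n_k)/(n_{k+1}\cdots n_{k+\ell})$ and applying Cauchy–Schwarz in $q$ gives $N'(x_1,x_2)\le\bigl(\sum_q A_q(x_1)^2\bigr)^{1/2}\bigl(\sum_q B_q(x_2)^2\bigr)^{1/2}$, each factor being a point count of the type \eqref{eqn: counting} with $k+\ell$ in place of $k$, to which Theorem~\ref{thm: high moments} applies; combining this with the localization $|x_1/x_2-1|\ll_k H/\min(x_1,x_2)$ and summing over $x_1,x_2$ gives the claim in the bulk range.

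I expect the main obstacle to be \emph{uniformity in $x_1,x_2$}. The divisor-moment estimates must hold for \emph{all} $x_1,x_2\le X$, including the ranges where one of them is as small as a fixed power of $H$: there the short interval $(x_i,x_i+H]$ is multiplicatively very wide, the naive factorization bounds degrade, and one must argue more carefully — for instance by summing the $r_k$'s over their large argument first, or by using that once one interval is small the companion interval's products are confined to a short range, reducing to a divisor-correlation count handled as in \S\ref{SEC:high-moments-proof}. One must also separately dispose of the sparse set of genuinely resonant pairs — chiefly $|x_1-x_2|\ll H$ (in particular $x_1=x_2$), where $N_{\mathrm{off}}$ and $N'$ are of the full order $H^{2k}$ and $H^{k+\ell}$ — which is harmless since such pairs number only $O(XH)=o(X^2)$, but the \emph{far} resonances require the full power-of-$\log$ saving. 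Arranging the bookkeeping so that every accumulated power of $\log$ is absorbed by the single hypothesis $H\ll X(\log X)^{-C_k}$ with $C_k=Ak^{Ak^{Ak}}$ is the technical crux.
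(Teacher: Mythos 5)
Your reductions are set up correctly: the variance identity for \eqref{eqn: 2k}, the orthogonality unfolding to cross-interval counts, the treatment of $\E_f\E_x|A_H|^{2k}$ via Theorem~\ref{thm: high moments}, and for \eqref{eqn: kl} the size/localization argument forcing $x_1\approx x_2$ (the paper's Case 2) plus a pointwise bound on near-diagonal pairs (your Cauchy--Schwarz in the ratio $q$ is a clean variant of the paper's trick of placing both intervals inside one slightly longer interval). The \eqref{eqn: kl} half of your proposal is essentially complete modulo routine details.

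The genuine gap is in \eqref{eqn: 2k}, which is where all the difficulty of the theorem lives. After your $\gcd$ decomposition $n_1\cdots n_k=d\alpha$, $n_{k+1}\cdots n_{2k}=d\beta$, $m_1\cdots m_k=\beta e$, $m_{k+1}\cdots m_{2k}=\alpha e$ (correct, and the observation $\alpha,\beta\gg_k x_1/H$ is a good start), you assert that the fourfold sum of restricted factorization counts can be ``majorized as in \S\ref{SEC:high-moments-proof}'' with a power-of-$\log$ saving. But neither case of \S\ref{SEC:high-moments-proof} applies in the range of Theorem~\ref{thm: Steinhaus}: Case 1's careless divisor bounds lose a factor $x^{O(1/\log\log x)}$, which is fatal when $H$ is only a power of $\log X$ below $X$ (no power saving is available), while Case 2's inputs (Shiu, Henriot) require interval lengths that are a fixed power of the relevant modulus, which fails because here $H$ may grow arbitrarily slowly (e.g.\ $H=\log\log X$), and the auxiliary variables $d,e$ in your decomposition are confined to intervals that can be far shorter still. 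This is precisely why the paper does not reuse \S\ref{SEC:high-moments-proof} for the cross term but proves Lemma~\ref{LEM:avg-cross-count} by a different mechanism: a symmetry trick selecting the ``violating'' variable maximizing $\tau_{2k}$, a projection forgetting half the variables whose fibers are bounded by $(k+\tau_{2k}(n_{2k}))^{2k}$, the congruence count of Lemma~\ref{Lem: yth} involving the function $\rad_k$, and the Rankin-type average of Lemma~\ref{Lem: average yx}, which crucially exploits the long average over $x_1\in[X]$ (not just over the short intervals) to control the divisor sums with only $(\log X)^{\mathcal{C}_k}$ loss. Your proposal flags ``uniformity in $x_1,x_2$'' as an expected obstacle but supplies no concrete substitute for these steps, so the central estimate $\sum_{x_1,x_2\le X}N_{\mathrm{off}}(x_1,x_2)=o(X^2H^{2k})$ remains unproved in your outline.
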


We prove Theorem~\ref{thm: high moments} in \S\ref{SEC:high-moments-proof}, and then we prove Theorem~\ref{thm: Steinhaus} in \S\ref{SEC:proof-concentration-theorem}.

\begin{proof}[Proof of Theorem~\ref{thm: main}, assuming Theorem~\ref{thm: Steinhaus}]
We use the notation $A_H(f,x)$ from \eqref{EQN:notation-A_H(f,x)-for-normalized-partial-sum}.
By Markov's inequality, Theorem~\ref{thm: Steinhaus} implies that there exists a set of the form
\begin{equation*}
\begin{split}
\mathcal{G}_{X, H} := \{ f\in \mathcal{F}_{X+H}:
\E_x |A_H(f, x)|^{2k} - k! &= o_{X\to +\infty}(1)~\text{for all $k\le V(X)$}, \\
\E_x \Bigl[A_H(f, x)^k \, \overline{A_H(f,x)^\ell}\Bigr] &= o_{X\to +\infty}(1)~\text{for all distinct $k,\ell\le V(X)$}
\}
\end{split}
\end{equation*}
for some $V(X) \to +\infty$ (making a choice of $V(X)$ based on $W(X)$) such that
\[\nu_{X+H}(\mathcal{G}_{X, H}) = 1-o_{X\to +\infty}(1).\]
Since the distribution $\mathcal{CN}(0,1)$ is uniquely determined by its moments (see e.g.~\cite[Theorem~30.1 and Example~30.1]{Bill}),
Theorem~\ref{thm: main} follows from the method of moments \cite[Chapter~5, Theorem~8.6]{Gut} (applied to sequences of random variables $A_H(f,x)$ indexed by $f\in \mathcal{G}_{X, H}$ as $X\to +\infty$).
\end{proof}

We believe results similar to our theorems above should also hold in the (extended) Rademacher case, though we do not pursue that case in this paper.

\subsection{Notation}
For any two functions $f, g: \R\to \R$, we write $f\ll g, g\gg f, g= \Omega(f)$ or $f = O(g)$ if there exists a positive constant $C$ such that $|f|\le C g$, and we write $f \asymp g$ or $f=\Theta(g)$ if $f\ll g$ and $g \gg f$.
We write $O_k$ to indicate that the implicit constant depends on $k$.
We write $o_{X\to +\infty}(g)$ to denote a quantity $f$ such that $f/g$ tends to zero as $X$ tends to infinity.

\subsection{Acknowledgements}

We thank Andrew Granville and the anonymous referee for many detailed comments that led us to significantly improve the results and presentation of our work.
We thank and Adam Harper for helpful discussions and useful comments and corrections on earlier versions.
We also thank Yuqiu Fu, Larry Guth, Kannan Soundararajan, Katharine Woo, and Liyang Yang for helpful discussions.
Finally, we thank Peter Sarnak for introducing us (the authors) to each other during the ``50 Years of Number Theory and Random Matrix Theory'' Conference at IAS and making the collaboration possible.

\section{Moments of random multiplicative functions in short intervals}
\label{SEC:high-moments-proof}

In this section, we prove Theorem~\ref{thm: high moments}.
For integers $k,n\ge 1$, let $\tau_k(n)$ denote the number of positive integer solutions $(d_1,\dots,d_k)$ to the equation $d_1\cdots d_k = n$.
It is known that (see \cite[Theorem~1.29 and Corollary~1.36]{Nor})
\begin{equation}
\label{INEQ:k-fold-divisor-bound}
\tau_k(n) \ll n^{O(\log{k}/\log\log{n})}
\;\textnormal{as $n\to +\infty$, provided $k = o_{n\to +\infty}(\log{n})$}.
\end{equation}

As we mentioned before, when $H\ge x$, Theorem~\ref{thm: high moments} is implied by \eqref{eqn: high}.
From now on, we focus on the case $H\le x$. 
We split the proof into two cases: small $H$ and large $H$.
For small $H$, we illustrate the general strategy and carelessly use divisor bounds;
for large $H$, we take advantage of bounds of Shiu \cite{Shiu80} and Henriot \cite{Henriot2012} on mean values and correlations of multiplicative functions over short intervals, together with a decomposition idea.

\subsection{Case 1: \texorpdfstring{$H\le x^{1-\ee k^{-1}}$}{H small}.}
Here we take $\ee$ to be a small absolute constant, e.g.~$\ee= \frac{1}{100}$. 

We begin with the following proposition.

\begin{proposition}
\label{PROP:easy-divisor-problem}
Let $k, y, H\ge 1$ be integers.
Suppose $y$ is large and $k\le \log\log{y}$.
Then $N_k(H;y)$, the number of integer tuples $(h_1, h_2, \dots, h_k) \in[-H, H]^{k}$ with $y|h_1h_2\cdots h_k$ and $h_1 h_2\cdots h_k \neq 0$,
is at most $(2H)^k \cdot O(\frac{H^{O(\frac{k\log k}{\log \log y})}}{y})$.  
\end{proposition}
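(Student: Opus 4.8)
The plan is to dominate $N_k(H;y)$ by a count over ordered factorizations of $y$, and then feed in the divisor bound \eqref{INEQ:k-fold-divisor-bound}. First I would dispose of a trivial range: if $y>H^k$, then every tuple $(h_1,\dots,h_k)\in[-H,H]^k$ with $h_1\cdots h_k\neq 0$ has $0<|h_1\cdots h_k|\le H^k<y$, so $y\nmid h_1\cdots h_k$; hence $N_k(H;y)=0$ and the claimed inequality holds trivially. So from now on I may assume $y\le H^k$.

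The key elementary step is a factorization-splitting observation: if $h_1,\dots,h_k$ are nonzero integers with $y\mid h_1\cdots h_k$, then there are positive integers $e_1,\dots,e_k$ with $e_1e_2\cdots e_k=y$ and $e_i\mid |h_i|$ for every $i$. This can be proved by induction on $k$, taking $e_1:=\gcd(y,|h_1|)$: a prime-by-prime comparison of valuations shows $y/e_1$ divides $h_2\cdots h_k$, so one recurses on $(h_2,\dots,h_k)$. (Equivalently, for each prime $p\mid y$ one distributes the exponent $v_p(y)$ greedily among the $v_p(h_i)$.) Using this, I would bound
\[
N_k(H;y)\le\sum_{\substack{e_1,\dots,e_k\ge1\\ e_1\cdots e_k=y}}\#\bigl\{(h_1,\dots,h_k)\in([-H,H]\setminus\{0\})^k:\ e_i\mid h_i\ \text{for all }i\bigr\}.
\]
For a fixed factorization, the number of nonzero multiples of $e_i$ in $[-H,H]$ is $2\lfloor H/e_i\rfloor\le 2H/e_i$, so the inner count is at most $\prod_{i=1}^k(2H/e_i)=(2H)^k/(e_1\cdots e_k)=(2H)^k/y$; and the number of ordered factorizations $e_1\cdots e_k=y$ is exactly $\tau_k(y)$. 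Hence $N_k(H;y)\le\tau_k(y)\cdot(2H)^k/y$.

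It remains to turn the $y$-aspect divisor bound into an $H$-aspect one. Since $k\le\log\log y$ we have $k=o(\log y)$, so \eqref{INEQ:k-fold-divisor-bound} gives $\tau_k(y)\ll y^{O(\log k/\log\log y)}$, and because $y\le H^k$ this is at most $H^{O(k\log k/\log\log y)}$. Substituting,
\[
N_k(H;y)\ll (2H)^k\cdot\frac{H^{O(k\log k/\log\log y)}}{y},
\]
which is the desired bound. I do not anticipate a serious obstacle here; the only points needing care are the trivial-range reduction $y\le H^k$ (this is precisely what lets the $y$-exponent in the divisor bound be traded for an $H$-exponent, at the cost of the factor $k$), the verification that $k\le\log\log y$ places us in the admissible range of \eqref{INEQ:k-fold-divisor-bound}, and the splitting observation, which must be phrased for $|h_i|$ rather than $h_i$ to accommodate negative coordinates.
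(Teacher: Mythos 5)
Your proposal is correct and follows essentially the same route as the paper: the same reduction to the trivial case $y>H^k$, the same gcd-based splitting of $y$ into a factorization $e_1\cdots e_k=y$ with $e_i\mid h_i$, the same bound $\tau_k(y)\cdot (2H)^k/y$, and the same use of the divisor bound \eqref{INEQ:k-fold-divisor-bound} combined with $y\le H^k$ to convert the exponent. No gaps.
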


\begin{proof}
The case $k=1$ is trivial; one has $N_1(H;y)\le 2H/y$. Suppose $k\ge 2$. 
Whenever $y|h_1h_2\cdots h_k\neq 0$, there exists a factorization $y=u_1u_2\cdots u_k$ where $u_i$ are positive integers such that $u_i|h_i\neq 0$ for all $1\le i \le k$.
(Explicitly, one can take $u_1 = \gcd (h_1, y)$ and $u_i = \gcd (h_i, \frac{y}{\gcd(y, h_1h_2\cdots h_{i-1})})$.)
It follows that $N_k(H; y) = 0$ if $y>H^k$, and
\begin{equation}
\label{INEQ:key-combo-ineq-from-easy-divisor-problem}
N_k(H; y) \le \sum_{u_1u_2\cdots u_k = y} N_1(H; u_1) N_1(H; u_2)\cdots N_1(H; u_k)
\le \tau_k(y)\cdot (2H)^k/y
\end{equation}
if $y\le H^k$.
By the divisor bound \eqref{INEQ:k-fold-divisor-bound}, Proposition~\ref{PROP:easy-divisor-problem} follows.
\end{proof}

\begin{corollary}\label{Cor: S_k}
Let $k, H, x\ge 1$ be integers.
Suppose $x$ is large and $k\le \log\log{x}$.
Then $S_k(x, H)$, the set of integer tuples $(h_1, h_2, \dots, h_k, y) \in [-H, H]^{k} \times (x, x+H]$ with $y|h_1h_2\cdots h_k$ and $h_1 h_2\cdots h_k \neq 0$,
has size at most $(2H)^{k} \cdot O(\frac{H^{1+O(\frac{k\log k}{\log \log{x}})}}{x})$. 
\end{corollary}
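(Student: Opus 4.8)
The plan is to reduce Corollary~\ref{Cor: S_k} to Proposition~\ref{PROP:easy-divisor-problem} by fixing the last coordinate $y$ and summing. First I would note that, since $x$ and $H$ are integers, the interval $(x, x+H]$ contains exactly $H$ integers, and for each fixed integer $y$ in this range the number of tuples $(h_1,\dots,h_k)\in[-H,H]^k$ with $y\mid h_1 h_2\cdots h_k$ and $h_1 h_2\cdots h_k\neq 0$ is, by definition, $N_k(H;y)$. Therefore
\[
|S_k(x,H)| \;=\; \sum_{x < y \le x+H} N_k(H;y).
\]

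Next I would check that Proposition~\ref{PROP:easy-divisor-problem} is applicable to each term: every $y$ appearing in the sum satisfies $y > x$, hence $y$ is large (as $x$ is), and $k \le \log\log x \le \log\log y$. Applying the proposition gives, for each such $y$,
\[
N_k(H;y) \;\le\; (2H)^k \cdot O\!\left(\frac{H^{O(k\log k/\log\log y)}}{y}\right).
\]
The only bookkeeping is to make this bound uniform in $y$ over the range: since $y > x$ we have $1/y < 1/x$, and since $\log\log y \ge \log\log x$ and $H \ge 1$ we have $H^{O(k\log k/\log\log y)} \le H^{O(k\log k/\log\log x)}$, with all implied constants absolute. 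Summing this common upper bound over the $H$ integers $y \in (x, x+H]$ yields
\[
|S_k(x,H)| \;\le\; H \cdot (2H)^k \cdot O\!\left(\frac{H^{O(k\log k/\log\log x)}}{x}\right) \;=\; (2H)^k \cdot O\!\left(\frac{H^{1+O(k\log k/\log\log x)}}{x}\right),
\]
which is exactly the asserted bound.

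I do not expect any genuine obstacle here: the entire arithmetic content sits in Proposition~\ref{PROP:easy-divisor-problem}, and this corollary is just the observation that slicing $S_k(x,H)$ along the value of $y$ costs merely a factor of $H$, while $y \asymp x$ throughout the range $(x,x+H]$ (recall $H\le x$ in the case of interest), so the $y$-dependence of the exponent can be harmlessly absorbed into an $x$-dependence.
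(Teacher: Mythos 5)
Your proof is correct and is essentially the paper's own argument: slice $S_k(x,H)$ along the value of $y$, apply Proposition~\ref{PROP:easy-divisor-problem} to each of the $H$ values $y\in(x,x+H]$, and use $y>x$ (hence $1/y<1/x$ and $\log\log y\ge\log\log x$) to make the bound uniform before summing. No issues.
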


\begin{proof}
$\#S_k(x,H) = \sum_{x<y\le x+H} N_k(H;y)$.
But here $N_k(H;y)\ll (2H)^k\cdot \frac{H^{O(\frac{k\log k}{\log \log{x}})}}{x}$.
\end{proof}

The $2k$-th moment in Theorem~\ref{thm: high moments} is $H^{-k}$ times the point count \eqref{eqn: counting} for the Diophantine equation
\begin{equation}
\label{EQN:equal-products}
n_1n_2\cdots n_k = n_{k+1} n_{k+2} \cdots n_{2k}.
\end{equation}
There are $k! H^{k} (1 +  O(k^2/H)) = k! H^k + O_k(H^{k-1})$ trivial solutions.
(We call a solution to \eqref{EQN:equal-products} \emph{trivial} if
the tuple $(n_{k+1}, \dots n_{2k})$ equals a permutation of $(n_1, \dots n_k)$.)
The number of trivial solutions is clearly $\ge k! H(H-1)\cdots (H-k+1)$, and $\le k! H^k$.)
It remains to bound $N_k (x, H)$, the number of nontrivial solutions $(n_1,\dots,n_{2k}) \in (x, x+H]^{2k}$ to \eqref{EQN:equal-products}.

We will show that $N_k (x, H)\ll H^k\cdot (H/x)^{1/2}$.
To this end, let $N_k'(x, H)$ denote the number of nontrivial solutions in $(x, x+H]^{2k}$ with the further constraint that 
\begin{equation}\label{eqn: cond}
    n_{2k} \notin \{n_1, n_2, \dots, n_k\}.
\end{equation}
Then for any $k\ge 2$,
we have
\begin{equation}
\label{INEQ:recursion}
N_k(x, H) \le N'_{k}(x, H)  + k \cdot (H+1) \cdot N_{k-1}(x, H),
\end{equation}
since for each $(n_1,\dots,n_{2k}) \in (x, x+H]^{2k}$, either \eqref{eqn: cond} holds or there exists $i\in [k]$ satisfying $n_i=n_{2k} \in (x, x+H]$.

A key observation\footnote{After writing the paper, the authors learned that this observation has appeared before in the literature (see \cite[proof of Lemma~22]{Bourgain2014});
however, we take the idea further, both in \S\ref{SEC:high-moments-proof} and in \S\ref{SEC:proof-concentration-theorem}.}
is that for nontrivial solutions to \eqref{EQN:equal-products} with constraint \eqref{eqn: cond}, 
\[n_{2k} | (n_1-n_{2k}) (n_2- n_{2k}) \cdots (n_k - n_{2k}),\]
and if we write $h_i := n_i-n_{2k}$
then $h_i \in [-H, H]$ are nonzero.
Given $h_1, h_2, \dots, h_k, y$, let 
\[C_{h_1,\dots, h_k, y}: = \prod_{1\le i\le k} (h_i + y ). \]
Then $N'_{k}(x, H)$ is (upon changing variables from $n_1,\dots,n_k$ to $h_1,\dots,h_k$) at most
\begin{equation}
\label{EXPR:k-fold-divisor-sum-upper-bound-for-N'_k(x,H)}
    \sum_{\substack{(h_1, \dots, h_k, n_{2k})\in S_k(x, H) \\ h_i+n_{2k}>0}} \# \biggl\{(n_{k+1}, \dots, n_{2k-1}) \in (x, x+H]^{k-1} : \biggl(\,\prod_{i=1}^{k-1} n_{k+i}\biggr) \bigg| C_{h_1,\dots, h_k, n_{2k}}  \biggr\}.
\end{equation}
If $x$ is large and $k$ is fixed (or $k\le \log\log x$, say), then by the divisor bound \eqref{INEQ:k-fold-divisor-bound}, the quantity \eqref{EXPR:k-fold-divisor-sum-upper-bound-for-N'_k(x,H)} is at most 
\[ \ll (H+x)^{O(\frac{k\log k}{\log \log x} )} \cdot \# S_k (x, H)
\ll O(H)^{k} \cdot O(H\cdot x^{-1 +O(\frac{k\log k}{\log \log x}) }),\]
where in the last step we used Corollary~\ref{Cor: S_k}.

By \eqref{INEQ:recursion}, it follows that $x$ is large and $k$ is fixed (or $k\le \log\log x$, say), then
\begin{equation}\label{eqn: N_k}
 N_k(x,H) \le k\cdot \max_{1\le j\le k}(O(kH)^{k-j}\cdot N'_j(x,H))
\ll k\cdot O(kH)^{k} \cdot O(H\cdot x^{-1 +O(\frac{k\log k}{\log \log x}) }).    
\end{equation}
(Note that $N_1(x,H) = 0$.)
So in particular, $N_k (x, H)\ll H^k\cdot (H/x)^{1/2}$ for fixed $k$ (or for $x$ large and $k\le (\log\log{x})^{1/2-\delta}$, say), since $H\le x^{1-\ee k^{-1}}$.
This suffices for Theorem~\ref{thm: high moments}.

\begin{remark}
\label{RMK:k-uniformity}
The argument above in fact gives, in Case 1, a version of Theorem~\ref{thm: high moments} with an implied constant of $O(k! k^2)$, uniformly over $k\le (\log \log x)^{1/2-\delta}$, say.
However, in Case 2 below, our proof relies on a larger body of knowledge for which the $k$-dependence does not seem easy to work out; this is why we essentially keep $k$ fixed in Theorem~\ref{thm: high moments}.
\end{remark}

\subsection{Case 2: \texorpdfstring{$x^{1-2\ee k^{-1}} \le H \le x$}{H large}}
Again, one can assume $\ee=\frac{1}{100}$.
In this case, we employ the following tool due to Henriot \cite[Theorem~3]{Henriot2012}.
For the multiplicative functions $f$ in \eqref{INEQ:Henriot-simplification} (and in similar places below), we let $f(m) := 0$ if $m\le 0$.

\begin{definition}
Given a real $A_1\ge 1$ and a function $A_2=A_2(\epsilon)\ge 1$ (defined for reals $\epsilon>0$), let $\mathcal{M}(A_1,A_2)$ denote the set of non-negative multiplicative functions $f(n)$ such that $f(p^{\ell}) \le A_1^{\ell}$ (for all primes $p$ and integers $\ell\ge 1$) and $f(n)\le A_2 n^{\epsilon}$ (for all $n\ge 1$).
\end{definition}

\begin{lemma}
\label{LEM:Henriot}
Let $f_1, f_2\in \mathcal{M}(A_1,A_2)$ and $\beta \in (0,1)$.
Let $a,q\in \Z$ with $|a|,q\ge 1$ and $\gcd(a,q) = 1$.
If $x,y\ge 2$ are reals with $x^{\beta} \le y \le x$ and $x\ge \max(q,|a|)^\beta$, then
\begin{equation}
\label{INEQ:Henriot-simplification}
\sum_{x\le n\le x+y} f_1(n)f_2(qn+a)
\ll_{\beta,A_1,A_2} \Delta_D \cdot y
\cdot \sum_{n_1n_2\le x} \frac{f_1(n_1)f_2(n_2)}{n_1n_2},
\end{equation}
where $\Delta_D \le \prod_{p|a^2} (1+(2A_1+A_1^2)p^{-1})$.
Furthermore,
\begin{equation}
\label{INEQ:Henriot-Delta_D-bound}
\Delta_D \le \biggl(\frac{|a|}{\phi(|a|)}\biggr)^{\!2A_1+A_1^2}
\quad(\textnormal{where $\phi$ denotes Euler's totient function}).
\end{equation}
\end{lemma}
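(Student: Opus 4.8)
The plan is to deduce Lemma~\ref{LEM:Henriot} from the uniform Nair--Tenenbaum bound of Henriot \cite[Theorem~3]{Henriot2012}, applied to the two irreducible linear forms $Q_1(X)=X$ and $Q_2(X)=qX+a$; the real content is then to simplify Henriot's more precise estimate down to the clean shape \eqref{INEQ:Henriot-simplification}. First I would check the hypotheses of Henriot's theorem: $Q_1,Q_2$ are irreducible, $Q_2$ is primitive because $\gcd(a,q)=1$, the interval-length condition $x^{\beta}\le y\le x$ is assumed, and the requirement that the coefficients of the forms be small relative to $x$ is exactly $x\ge\max(q,|a|)^{\beta}$. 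The discriminant-type quantity attached to this configuration is, up to sign and bounded powers, the resultant $\operatorname{Res}(Q_1,Q_2)=a$, so the set of ``bad'' primes is $\{p:p\mid a\}$; a prime $p\mid q$ with $p\nmid a$ never divides $qn+a$ and hence contributes generically, so it does not enter the discriminant factor. This explains the shape $\Delta_D=\prod_{p\mid a^2}(1+O(A_1^2p^{-1}))$, and \eqref{INEQ:Henriot-Delta_D-bound} follows from the elementary estimate $1+cA_1^2/p\le(1-1/p)^{-O(A_1^2)}$ multiplied over $p\mid a$, since $\prod_{p\mid a}(1-1/p)^{-1}=|a|/\phi(|a|)$.

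The second step is to pass from Henriot's bound to \eqref{INEQ:Henriot-simplification}. Henriot's right-hand side has the shape
\[
\Delta_D\cdot\frac{y}{\log x}\cdot\prod_{p\le x}\Bigl(1-\frac{\rho(p)}{p}\Bigr)\cdot\sum_{m\le x^{2}}\frac{h(m)}{m},
\]
where $\rho$ is the root-counting function of $Q_1Q_2$ (so $\rho(p^{\nu})$ is bounded by an absolute constant for $p\nmid a$) and $h$ is the combined non-negative multiplicative weight built from $f_1,f_2$. Away from the primes dividing $a$, the local factor $h(p^{\nu})$ is dominated, up to a constant depending only on $\beta$ and $A_1$, by that of the Dirichlet convolution $f_1*f_2$; the contribution of the finitely many primes $p\mid a$ is again absorbed into $\Delta_D$ and the implied constant. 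Hence $\sum_{m\le x^{2}}\frac{h(m)}{m}\ll_{\beta,A_1,A_2}\sum_{m\le x^{2}}\frac{(f_1*f_2)(m)}{m}$, and since $f_1*f_2$ has polynomially controlled partial sums this is $\ll\log x\cdot\sum_{m\le x}\frac{(f_1*f_2)(m)}{m}=\log x\cdot\sum_{n_1n_2\le x}\frac{f_1(n_1)f_2(n_2)}{n_1n_2}$. Discarding the harmless factor $\prod_{p\le x}(1-\rho(p)/p)\le 1$ and cancelling the surviving $\log x$ against the $1/\log x$ in Henriot's bound yields \eqref{INEQ:Henriot-simplification}. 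In case the cited statement is phrased for a single multiplicative function of the factored polynomial $Q_1Q_2$ rather than for a pair of functions, one first writes $f_1(n)f_2(qn+a)$ in terms of a multiplicative function of $n(qn+a)$ on the part coprime to $a$ and absorbs the contribution of $\gcd(n,qn+a)\mid a$ into $\Delta_D$; this reduction is routine.

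The only genuine work is the bookkeeping in the second step: pinning down the exact forms of $\rho$ and $h$ in Henriot's estimate, verifying the pointwise domination $h(p^{\nu})\ll(f_1*f_2)(p^{\nu})$ for the good primes, and invoking the standard translation between the ``Euler product'' and ``divisor sum'' shapes of Nair--Tenenbaum-type bounds to handle the range extension from $x$ to $x^{2}$. None of this is deep, and $k$-uniformity is not a concern here since the parameters $A_1,A_2,\beta$ are fixed in all of our applications. I expect no real obstacle beyond careful matching of normalizations.
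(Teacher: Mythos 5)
Your proposal follows essentially the same route as the paper: both deduce the lemma from Henriot's Theorem~3 \cite{Henriot2012} applied to the two linear forms $Q_1(n)=n$ and $Q_2(n)=qn+a$ with the two-variable weight $F(n_1,n_2)=f_1(n_1)f_2(n_2)$ (the paper citing \cite[proof of Lemma~2.3(ii)]{KRT} for the unraveling you sketch by hand, after which one simply drops the nonnegative local factors), and both obtain \eqref{INEQ:Henriot-Delta_D-bound} from the same elementary estimate $1+rp^{-1}\le (1-p^{-1})^{-r}$. The only caveat is that your fallback reduction to a single multiplicative function of $n(qn+a)$ is unnecessary (and would be delicate to justify), precisely because Henriot's Theorem~3 already accepts the pair $f_1,f_2$ directly.
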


\begin{proof}
Everything but \eqref{INEQ:Henriot-Delta_D-bound} follows from \cite[Theorem~3]{Henriot2012} and the unraveling of definitions done in \cite[proof of Lemma~2.3(ii)]{KRT};
in the notation of \cite[Theorem~3]{Henriot2012}, we take
\begin{equation*}
(k, Q_1(n), Q_2(n), \alpha, \delta,
A, B, F(n_1,n_2))
= (2, n, qn+a, \tfrac{9}{10}\beta, \tfrac{9}{10}\beta,
A_1, A_2(\epsilon)^2, f_1(n_1)f_2(n_2)),
\end{equation*}
where $\epsilon = \frac{\alpha}{100(2+\delta^{-1})}$.\footnote{In fact, one could extract a more complicated version of \eqref{INEQ:Henriot-simplification} from \cite[Theorem~3]{Henriot2012}, which in some cases (e.g.~if $f_1=f_2=\tau_k$) would improve the right-hand side of \eqref{INEQ:Henriot-simplification} by roughly a factor of $\log{x}$.}
The inequality \eqref{INEQ:Henriot-Delta_D-bound} then follows from the fact that $1+rp^{-1} \le (1+p^{-1})^r \le (1-p^{-1})^{-r}$ for every prime $p$ and real $r\ge 1$.
\end{proof}



Also useful to us will be the following immediate consequence of Shiu \cite[Theorem~1]{Shiu80}.

\begin{lemma}
\label{LEM:Shiu-simple-case}
Let $f\in \mathcal{M}(A_1,A_2)$ and $\beta \in (0,1)$.
If $x,y\ge 2$ are reals with $x^{\beta} \le y \le x$, then
\begin{equation*}
\sum_{x\le n\le x+y} f(n)
\ll_{\beta,A_1,A_2} \frac{y}{\log x} \exp\biggl(\,\sum_{p\le x} \frac{f(p)}{p} \biggr).
\end{equation*}
\end{lemma}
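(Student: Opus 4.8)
The plan is to deduce this directly from Shiu's theorem \cite[Theorem~1]{Shiu80}, which bounds $\sum_{x<n\le x+y} g(n)$ for non-negative multiplicative $g$ satisfying two hypotheses: a prime-power growth condition $g(p^\ell)\le A_1^\ell$, and a polynomial bound $g(n)\ll_\epsilon n^\epsilon$. Both are built into the definition of $\mathcal{M}(A_1,A_2)$, so the hypotheses are met verbatim for $f\in\mathcal{M}(A_1,A_2)$. Shiu's conclusion, in the special case of the linear form $n$ (i.e. $q=1$, $a=0$ in his notation, so no congruence restriction and no arithmetic-progression modulus), reads
\[
\sum_{x<n\le x+y} f(n) \ll_{\beta,A_1,A_2} \frac{y}{\log x}\,\exp\biggl(\,\sum_{p\le x}\frac{f(p)}{p}\biggr),
\]
valid uniformly for $x^\beta\le y\le x$ (Shiu requires $y\ge x^\beta$ for some fixed $\beta\in(0,1)$, which is exactly our hypothesis, and $y\le x$ is harmless). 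First I would state precisely which case of Shiu's theorem is being invoked, matching parameters; then observe that replacing the half-open interval $(x,x+y]$ by the closed interval $[x,x+y]$ changes the sum by at most $f(\lceil x\rceil)+f(\lfloor x+y\rfloor)\ll_\epsilon x^\epsilon$, which is absorbed into the stated bound (since $y/\log x \gg x^{\beta}/\log x$ dominates any fixed power $x^\epsilon$ once $\epsilon<\beta$, and the error from endpoints is in any case lower order).

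The only genuinely routine point to check is that Shiu's hypotheses, as stated in \cite{Shiu80}, are phrased in a way compatible with membership in $\mathcal{M}(A_1,A_2)$: Shiu assumes $g(p^\ell)\le A_1^\ell$ for all primes $p$ and $\ell\ge1$ (ours gives exactly this) and $g(m)\ll_{\epsilon} m^\epsilon$ for all $\epsilon>0$ (ours gives $f(m)\le A_2 m^\epsilon$, which is this with an explicit constant). So no adaptation is needed. I would also remark that the implied constant depends only on $\beta$, $A_1$, and the function $A_2(\cdot)$ — equivalently on $\beta$, $A_1$, $A_2$ — since that uniformity is what gets used downstream in Case 2 of the proof of Theorem~\ref{thm: high moments}.

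There is essentially no obstacle here; the lemma is a packaging of Shiu's estimate into the language of $\mathcal{M}(A_1,A_2)$ used elsewhere in the paper, exactly parallel to how Lemma~\ref{LEM:Henriot} packages Henriot's theorem. If anything, the one thing worth a sentence of care is confirming that the range $x^\beta\le y\le x$ lies within Shiu's admissible range and that the endpoint/interval-convention discrepancies contribute only negligibly; both are immediate. Thus the proof is a one-line citation plus a remark on endpoints and on the uniformity of the implied constant.
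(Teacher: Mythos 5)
Your proposal is correct and is exactly what the paper does: the lemma is stated there without proof as an ``immediate consequence of Shiu \cite[Theorem~1]{Shiu80}'', i.e.\ a direct citation with the parameters of $\mathcal{M}(A_1,A_2)$ matching Shiu's hypotheses (taking trivial modulus/residue class), with the interval-convention and endpoint discrepancies absorbed as you note.
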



We will apply the above results to $f=\tau_k$ over intervals of the form $[x, x+y]$ with $y\gg x^{1/2k}$, say.
Here $\tau_k\in \mathcal{M}(k,O_{k,\epsilon}(1))$, by \eqref{INEQ:k-fold-divisor-bound} and the fact that $\tau_k(p) = k$ and
\begin{equation}
\label{INEQ:tau_k-sub-multiplicative}
\tau_{k}(mn)\le \tau_{k}(m)\tau_k(n) \qquad\text{for arbitrary integers $m,n\ge 1$}.
\end{equation}
Also, recall, for integers $k\ge 1$ and reals $x\ge 2$, the standard bound
\begin{equation}
\label{INEQ:tau_k-first-moment}
\sum_{n\le x} \tau_k(n)
\ll_k \frac{x}{\log x} \exp\biggl (\,\sum_{p\le x} \frac{k}{p} \biggr)
\ll_k x (\log{x})^{k-1}
\end{equation}
(see e.g.~\cite[\S2.2]{KRT}) and the consequence
\begin{equation}
\label{INEQ:tau_k-convolution-first-moment}
\sum_{n_1n_2\le x} \tau_k(n_1)\tau_k(n_2)
= \sum_{n\le x} \tau_{2k}(n) \ll_k x (\log{x})^{2k-1}.
\end{equation}
(See \cite{Nor} for a version of \eqref{INEQ:tau_k-first-moment} with an explicit dependence on $k$.
For Lemmas~\ref{LEM:Henriot} and~\ref{LEM:Shiu-simple-case}, we are not aware of any explicit dependence on $\beta,A_1,A_2$ in the literature.)

\begin{lemma}
\label{LEM:average-linear-shift-correlation}
Let $V,U,q\ge 1$ be integers with $q\le U^{k-2}$, where $k\ge 2$.
Let $\rho\in \{-1,1\}$.
Then
\begin{equation*}
\sum_{\substack{u\in [U, 2U) \\ 1\le v\le V}}
\tau_k(u) \tau_k(\rho  v + uq)
\ll_k VU (1+\log{VU})^{3k}.
\end{equation*}
\end{lemma}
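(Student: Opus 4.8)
The plan is to bound the double sum by splitting the $v_1$-range dyadically and, on each dyadic block, applying Henriot's correlation bound (Lemma~\ref{LEM:Henriot}) in the $u_2$-variable, followed by summation over the dyadic parameter. First I would fix a dyadic scale $V_1' \in [V, 2V)$ with $V \le V_1$ a power of $2$, and consider the inner sum $\sum_{v_1 \sim V} \sum_{u_2 \in [U_2, 2U_2)} \tau_k(u_2)\tau_k(\ee_1 v_1 + u_2 q)$. For each fixed $v_1$, the inner $u_2$-sum is of the shape treated by Lemma~\ref{LEM:Henriot} with $f_1 = f_2 = \tau_k$, linear forms $n \mapsto n$ and $n \mapsto qn + \ee_1 v_1$, and interval length $y = U_2$ around $x = U_2$; the hypothesis $q \le U_2^{k-2}$ (together with $|\ee_1 v_1| \le V_1 \le U_2^{k}$ after possibly enlarging, or handled separately when $V_1$ is large) should give $x \ge \max(q, |a|)^\beta$ for a suitable fixed $\beta \in (0,1)$, say $\beta = 1/(k+1)$ or similar. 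Using \eqref{INEQ:tau_k-convolution-first-moment} to bound $\sum_{n_1 n_2 \le U_2} \tau_k(n_1)\tau_k(n_2)/(n_1 n_2) \ll_k (\log U_2)^{2k}$ (by partial summation from the first-moment bound), Lemma~\ref{LEM:Henriot} yields, for each $v_1$, a bound of size $\Delta_{D}(v_1) \cdot U_2 \cdot (\log U_2)^{2k}$ where $\Delta_D(v_1) \ll (\ee_1 v_1 + \cdots)$ depends on the radical of $a = \ee_1 v_1$ or rather $a^2$.

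Next I would sum the factor $\Delta_D(v_1)$ over $v_1 \in [1, V_1']$. By \eqref{INEQ:Henriot-Delta_D-bound}, $\Delta_D(v_1) \ll (|a|/\phi(|a|))^{O(k^2)}$ with $|a| \asymp v_1$ (absorbing the $q$ and $u_2$ contributions, which are coprime-controlled or bounded); the standard fact that $\sum_{v \le V} (v/\phi(v))^{C} \ll_C V$ then gives $\sum_{v_1 \le V_1'} \Delta_D(v_1) \ll_k V_1'$. Combining, the dyadic block contributes $\ll_k V_1' U_2 (\log U_2)^{2k}$, and summing over the $O(\log V_1)$ dyadic scales $V_1'$ gives $\ll_k V_1 U_2 (\log U_2)^{2k} \log V_1 \ll_k V_1 U_2 (1 + \log V_1 U_2)^{2k+1}$, comfortably within the claimed $(1 + \log V_1 U_2)^{3k}$ (the extra room in the exponent presumably accommodates the partial-summation losses and the case analysis below; I would not optimize it).

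The main obstacle — and the place requiring care — is verifying the hypotheses of Lemma~\ref{LEM:Henriot} uniformly across all the relevant ranges of $V_1, U_2, q$, in particular the condition $x \ge \max(q,|a|)^\beta$ with $x = U_2$, $a = \ee_1 v_1$ (plus the constraint $y = U_2 \ge U_2^\beta$, which is automatic). When $v_1$ is at most a fixed power of $U_2$ and $q \le U_2^{k-2}$, choosing $\beta$ small enough (depending on $k$) makes this work; but when $V_1$ is very large relative to $U_2$, one must instead sum first over $u_2$ trivially using $\tau_k(u_2) \ll_k U_2^{o(1)}$ and apply Henriot (or Shiu, Lemma~\ref{LEM:Shiu-simple-case}) in the $v_1$-variable over the interval $[\ee_1 + U_2 q, \ee_1 V_1' + U_2 q]$ of length $\asymp V_1'$, which is long compared to $(U_2 q)^\beta$ precisely when $V_1$ dominates. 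So I expect the proof to branch into two regimes according to whether $V_1 \le U_2^{B_k}$ or $V_1 > U_2^{B_k}$ for a suitable exponent $B_k$ depending on $k$, applying the correlation bound in whichever variable ranges over the longer interval; in each regime the divisor-bound \eqref{INEQ:k-fold-divisor-bound} lets one dispose of the short variable cheaply, losing only $(\log)^{O(k)}$, and the $\Delta_D$-summation argument is the same. Assembling the two cases and checking that the coprimality hypothesis $\gcd(a,q)=1$ in Lemma~\ref{LEM:Henriot} can be arranged — by pulling out $d = \gcd(\ee_1 v_1, q)$ and reindexing, which only improves the bound — completes the argument.
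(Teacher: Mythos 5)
Your overall plan coincides with the paper's proof: split according to whether $V_1$ or $U_2$ is the long variable, apply Henriot (Lemma~\ref{LEM:Henriot}) in the $u_2$-variable when $V_1$ is comparatively small (after extracting $d=\gcd(v_1,q)$ to restore coprimality and summing the resulting $\Delta_D$-factors via $\sum_{a\le V}(a/\phi(a))^{O(k^2)}\ll V$), and apply a Shiu-type mean value in the $v_1$-variable when $V_1$ is large; the paper's cut is simply $V_1\ge U_2$ versus $V_1\le U_2$, with no dyadic decomposition of the $v_1$-range needed.

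Two concrete points in your write-up would not deliver the claimed bound as stated. First, in the large-$V_1$ regime you propose to ``dispose of the short variable cheaply, losing only $(\log)^{O(k)}$'' via the pointwise divisor bound \eqref{INEQ:k-fold-divisor-bound}. That bound gives $\tau_k(u_2)\ll U_2^{O(1/\log\log U_2)}$, and summing trivially over the $\asymp U_2$ values of $u_2$ with it loses a factor $U_2^{o(1)}$ (e.g.\ of size $\exp(\log U_2/\log\log U_2)$) which is \emph{not} bounded by any fixed power of $\log (V_1U_2)$, even on your branch $V_1>U_2^{B_k}$; so the final estimate overshoots the target. The repair is what the paper does: for each fixed $u_2$ bound the $v_1$-sum by $\ll_k V_1(1+\log V_1)^{k-1}$ using Lemma~\ref{LEM:Shiu-simple-case} and \eqref{INEQ:tau_k-first-moment} (treating separately the case where the interval $\{\ee_1v_1+u_2q\}$ contains $0$), and then sum $\tau_k(u_2)$ over $[U_2,2U_2)$ with the mean-value bound \eqref{INEQ:tau_k-first-moment}, costing only $(1+\log U_2)^{k-1}$. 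Second, the gcd extraction in the Henriot regime is not free and does not ``only improve the bound'': writing $v_1=da$ with $d\mid q$ requires submultiplicativity $\tau_k(d(\ee_1a+u_2q/d))\le \tau_k(d)\tau_k(\ee_1a+u_2q/d)$ and then the payment $\sum_{d\mid q}\tau_k(d)/d\ll_k(1+\log q)^k\ll_k(1+\log U_2)^k$ (using $q\le U_2^{k-2}$); added to the exponent $2k$ coming from \eqref{INEQ:tau_k-convolution-first-moment}, this is exactly where the $3k$ in the statement comes from, and your extra dyadic splitting in $v_1$ would tack on one more logarithm, giving $3k+1$ — harmless for the later applications but not the inequality as stated. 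With these corrections your argument is essentially the paper's.
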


\begin{proof}
First suppose $V\ge U$. 
If $u\in [U,2U)$, then $I := \{\rho v+uq: 1\le v\le V\}$ is an interval of length $V\ge \max(V, U)$ contained in $[-V, V + 2U^{k-1}]$, so by Lemma~\ref{LEM:Shiu-simple-case} and \eqref{INEQ:tau_k-first-moment}, we obtain the bound
\begin{equation*}
\sum_{1\le v\le V} \tau_k(\rho v+uq) \ll_k V (1+\log{V})^{k-1}.
\end{equation*}
(We consider the cases $0\in I$ and $0\notin I$ separately.
The former case follows directly from \eqref{INEQ:tau_k-first-moment};
the latter case requires Lemma~\ref{LEM:Shiu-simple-case}.)
Then sum over $u$ using \eqref{INEQ:tau_k-first-moment}.
Since $(1+\log{V})^{k-1}(1+\log{U})^{k-1} \le (1+\log{VU})^{2k-2}$, Lemma~\ref{LEM:average-linear-shift-correlation} follows.

Now suppose $V\le U$.
By casework on $d := \gcd(v,q)\le q$, we have
\begin{equation*}
\sum_{\substack{u\in [U, 2U) \\ 1\le v\le V}}
\tau_k(u) \tau_k(\rho  v + uq)
\le \sum_{d|q} \tau_k(d)
\sum_{\substack{u\in [U, 2U) \\ 1\le a\le V/d \\ \gcd(a, q/d) = 1}} \tau_k(u)\tau_k(\rho a + uq/d).
\end{equation*}
Since $d|q$ and $1\le a\le V/d$, we have $U \ge \max(a, q^{1/k})$. Now for any fixed $1\le a\le V/d$, 
\begin{equation*}
\sum_{u\in [U, 2U)} \tau_k(u)\tau_k(\rho a + uq/d)
\ll_{k} \biggl(\frac{a}{\phi(a)}\biggr)^{\!2k+k^2} \cdot U \cdot (1+\log{U})^{2k}
\end{equation*}
by Lemma~\ref{LEM:Henriot} and \eqref{INEQ:tau_k-convolution-first-moment}, provided $\gcd(a, q/d) = 1$.
Upon summing over $1\le a\le V/d$ using \cite[p.~61, (2.32)]{montgomery2007multiplicative}, it follows that
\begin{equation*}
\sum_{\substack{u\in [U, 2U) \\ 1\le v\le V}}
\tau_k(u) \tau_k(\rho  v + uq)
\ll_k \sum_{d|q} \tau_k(d)\cdot \frac{V}{d}\cdot U \cdot (1+\log{U})^{2k}.
\end{equation*}
Since $\sum_{d\le q} \frac{\tau_k(d)}{d} \ll_k (1+\log{q})^k$ (by \eqref{INEQ:tau_k-first-moment}) and
 $q\le U^{k-2}$, Lemma~\ref{LEM:average-linear-shift-correlation} follows.
\end{proof}

\begin{lemma}
\label{LEM:average-multi-linear-shift-correlation}
Let $V_1,U_2,\dots,U_k\ge 1$ be integers, where $k\ge 2$.
Let $\ee_1\in \{-1,1\}$.
Then
\begin{equation*}
\sum_{\substack{v_1,u_2,\dots,u_k\ge 1 \\ u_i\in [U_i, 2U_i) \\ v_1\le V_1}}
\tau_k(u_2)\cdots \tau_k(u_k)  \tau_k(\ee_1  v_1 + u_2\cdots u_k)
\ll_k L_k(V_1U_2\cdots U_k),
\end{equation*}
where $L_k(r) := r\cdot (1+\log{r})^{3k + (k-2)(k-1)} = r\cdot (1+\log{r})^{k^2+2}$ for $r\ge 1$.
\end{lemma}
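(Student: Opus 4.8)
The plan is to deduce Lemma~\ref{LEM:average-multi-linear-shift-correlation} from the two-variable Lemma~\ref{LEM:average-linear-shift-correlation} by freezing all of $u_3,\dots,u_k$ and treating $q:=u_3\cdots u_k$ as the fixed multiplier appearing there. When $k=2$ the product $u_2\cdots u_k$ is simply $u_2$, the assertion is literally Lemma~\ref{LEM:average-linear-shift-correlation} with $q=1$, and the exponents agree ($L_2(r)=r(1+\log r)^6$ while $3k=6$), so I will assume $k\ge 3$. Since both the weight $\tau_k(u_2)\cdots\tau_k(u_k)$ and the argument $u_2\cdots u_k$ are symmetric in $u_2,\dots,u_k$, I would first relabel the indices so that $U_2=\max_{2\le i\le k}U_i$; this merely permutes the parameters $U_i$ and leaves the sum unchanged.

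Next, for each fixed choice of $u_3,\dots,u_k$ with $u_i\in[U_i,2U_i)$, I would set $q:=u_3\cdots u_k$ and apply Lemma~\ref{LEM:average-linear-shift-correlation} to the remaining sum $\sum_{u_2\in[U_2,2U_2),\,1\le v_1\le V_1}\tau_k(u_2)\tau_k(\ee_1 v_1+u_2q)$, getting a bound $\ll_k V_1U_2(1+\log V_1U_2)^{3k}$ that is uniform in $q$ and hence in $u_3,\dots,u_k$. The one point needing attention is the hypothesis $q\le U_2^{k-2}$ of Lemma~\ref{LEM:average-linear-shift-correlation}: since $U_i\le U_2$ for $i\ge 3$, one only gets $q<\prod_{i=3}^{k}2U_i\le 2^{k-2}U_2^{k-2}$. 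However, the proof of Lemma~\ref{LEM:average-linear-shift-correlation} in fact tolerates the weaker hypothesis $q\le 2^{k-2}U_2^{k-2}$, with an implied constant still depending only on $k$: the extra factor $2^{k-2}$ only moves the endpoints and lengths of the intervals on which Lemmas~\ref{LEM:Henriot} and~\ref{LEM:Shiu-simple-case} are invoked by $O_k(1)$, and in the handful of edge cases where this would spoil those lemmas' hypotheses one has $U_2\ll_k 1$, so the multiplier $u_2q$ is $O_k(1)$ and the sub-sum over $(u_2,v_1)$ collapses to a single divisor-sum estimate over the $v_1$-interval, which is easily within the target bound $V_1U_2(1+\log V_1U_2)^{3k}$.

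It then remains to reinsert the weights $\tau_k(u_3)\cdots\tau_k(u_k)$ and sum over the boxes $u_i\in[U_i,2U_i)$, $3\le i\le k$. By \eqref{INEQ:tau_k-first-moment} (the case $U_i=1$ being trivial) one has $\sum_{u_i\in[U_i,2U_i)}\tau_k(u_i)\ll_k U_i(1+\log U_i)^{k-1}$, so the product over $3\le i\le k$ is $\ll_k\bigl(\prod_{i=3}^{k}U_i\bigr)\bigl(\prod_{i=3}^{k}(1+\log U_i)\bigr)^{k-1}$; and since every $1+\log U_i$ is at most $1+\log(U_3\cdots U_k)$, this is $\ll_k\bigl(\prod_{i=3}^{k}U_i\bigr)\bigl(1+\log(U_3\cdots U_k)\bigr)^{(k-2)(k-1)}$. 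Multiplying by the inner bound from the previous paragraph and bounding every logarithmic factor by $1+\log(V_1U_2\cdots U_k)$ gives the estimate $\ll_k(V_1U_2\cdots U_k)(1+\log V_1U_2\cdots U_k)^{3k+(k-2)(k-1)}$, and since $3k+(k-2)(k-1)=k^2+2$ this is precisely $L_k(V_1U_2\cdots U_k)$.

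I do not anticipate a serious obstacle here: the whole argument is a clean ``freeze all but one variable'' reduction to the already-established Lemma~\ref{LEM:average-linear-shift-correlation}. The two points that genuinely require care are (i) arranging the multiplier hypothesis $q\le U_2^{k-2}$, which is the reason for the relabeling and the $2^{k-2}$-slack remark above, and (ii) the bookkeeping of the powers of $\log$ — in particular collapsing $\prod_{i\ge 3}(1+\log U_i)^{k-1}$ into a single power of $1+\log(V_1U_2\cdots U_k)$ without carelessly losing a factor, so that the final exponent comes out to exactly $k^2+2$ rather than something larger.
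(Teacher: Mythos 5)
Your proposal is correct and follows essentially the same route as the paper: order the parameters so that $U_2=\max_{2\le i\le k}U_i$, freeze $u_3,\dots,u_k$ and apply Lemma~\ref{LEM:average-linear-shift-correlation} with $q=u_3\cdots u_k$ to the $(u_2,v_1)$-sum, then sum the remaining $k-2$ variables via \eqref{INEQ:tau_k-first-moment} and collect the logarithms to get the exponent $3k+(k-2)(k-1)=k^2+2$. In fact you are slightly more careful than the paper, which writes $q\le U_2^{k-2}$ even though $u_i<2U_i$ only gives $q<2^{k-2}U_2^{k-2}$; your observation that this harmless $2^{k-2}$ slack is absorbed into the $k$-dependent constants correctly patches that cosmetic point.
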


\begin{proof}
We may assume $U_2\ge \cdots \ge U_k$.
Let $q := u_3\cdots u_k\le U_2^{k-2}$ and apply Lemma~\ref{LEM:average-linear-shift-correlation} (with $(V,U)=(V_1,U_2)$) to sum over $u_2,v_1$.
Then sum over the $k-2$ variables $u_3,\dots,u_k$ using \eqref{INEQ:tau_k-first-moment}.
\end{proof}

With the lemmas above in hand,
we now build on the strategy from Case 1 to attack Case 2.
As before, we let $N'_k(x, H)$ denote the number of nontrivial solutions $(n_1, \dots, n_k, n_{k+1}, \dots, n_{2k}) \in (x, x+H]^{2k}$ to \eqref{EQN:equal-products} with constraint~\eqref{eqn: cond}.
Again, for such solutions we write $h_i=n_i-n_{2k}\in [-H,H]\setminus \{0\}$, and there exist positive integers $u_i$ ($1\le i\le k$) such that $u_i|h_i$ with $u_1u_2\cdots u_k = n_{2k}\in (x,x+H]$;
so $u_i\le H$, and there exist signs $\ee_i\in \{-1, 1\}$ and positive integers $v_i\le H/U_i$ with $h_i = \ee_i u_i v_i$, whence
\[C_{h_1,\dots, h_k, n_{2k}}: = \prod_{i=1}^{k} (h_i + n_{2k})
= \prod_{1\le i \le k} (\ee_i u_iv_i + u_1u_2\cdots u_k). \]

As before, the quantity $N'_k(x,H)$ is at most \eqref{EXPR:k-fold-divisor-sum-upper-bound-for-N'_k(x,H)}.
Upon splitting the range $[H]$ for each $u_i$ into $\le 1+\log_2{H}\ll 1+\log{x}$ dyadic intervals, we conclude that
\begin{equation}
\label{INEQ:dyadic-bound-N'_k(x,H)}
    N'_k(x,H) \le \sum_{\ee_i,U_i}
    \sum_{\substack{u_i\in [U_i, 2U_i) \\ v_i\le H/U_i \\ x < n_{2k} \le x+H \\  h_i+n_{2k}>0}} \tau_k (C_{h_1,\dots, h_k, n_{2k}})
    \le 2^k\cdot O(1+\log{x})^k \cdot \mathcal{S}(x,H),
\end{equation}
where we let $n_{2k} := u_1u_2\cdots u_k$ and $h_i := \ee_i u_i v_i$ in the sum over $u_i,v_i$ (for notational brevity), and where $\mathcal{S}(x,H)$ denotes the maximum of the quantity
\begin{equation*}
    S(\vec{\ee},\vec{U}) := \sum_{\substack{u_i\in [U_i, 2U_i) \\ v_i\le H/U_i \\ x < n_{2k} \le x+H \\  h_i+n_{2k}>0}} \tau_k (C_{h_1,\dots, h_k, n_{2k}})
    = \sum_{\substack{u_i\in [U_i, 2U_i) \\ v_i\le H/U_i \\ x < n_{2k} \le x+H \\  h_i+n_{2k}>0}}
    \tau_k\biggl(\,\prod_{1\le i \le k} (\ee_i u_iv_i + u_1u_2\cdots u_k)\biggr)
\end{equation*}
over all tuples $\vec{\ee}=( \ee_1,\dots,\ee_k)\in \{-1,1\}^{k}$ and $\vec{U} = (U_1,\dots,U_k)$ where each $U_i \in [H]\cap \{1,2,4,8,\dots\}$ with $2^{-k}x < U_1\cdots U_k\le x+H$.
Now, for the rest of \S\ref{SEC:high-moments-proof}, fix a choice of $\ee_1,\dots,\ee_k,U_1,\dots,U_k$ with
\begin{equation*}
    \mathcal{S}(x,H) = S(\vec{\ee},\vec{U}).
\end{equation*}
By symmetry, we may assume that $U_1\ge U_2\ge \cdots \ge U_k$.

We now bound $S(\vec{\ee},\vec{U})$, assuming $k\ge 2$.
(For $k=1$, we can directly note that $N'_1(x,H) = 0$.)
A key observation is that since $U_1U_2\cdots U_k\le x+H\le 2x$ and $U_1\ge U_2\ge \cdots \ge U_k\ge 1$, we have (since $H\ge x^{1-2\ee}$ and $k\ge 2$)
\[\frac{H}{U_k}\ge \frac{H}{U_{k-1}} \ge \cdots \ge \frac{H}{U_2} \ge \frac{H}{(U_1U_2)^{1/2}} \ge \frac{x^{1-2\ee}}{(2x)^{1/2}}\gg x^{1/3}.  \]
By the sub-multiplicativity property \eqref{INEQ:tau_k-sub-multiplicative}, we have that $S(\vec{\ee},\vec{U})$ is at most
\begin{equation}
\label{EXPR:upper-bound-for-S(ee,U)}
\sum_{\substack{u_i\in [U_i, 2U_i)\\ x < u_1u_2\cdots u_k \le x+H}}
\sum_{\substack{v_i\le H/u_i}}
\tau_k(u_1) \tau_k(u_2)\cdots \tau_k(u_k) \prod_{1\le i \le k} \tau_k(\ee_i  v_i + u_1u_2\cdots u_{-i} \cdots u_{k}),
\end{equation}
where $u_{-i}$ means that the factor $u_i$ is not included. 
But for each $i\ge 2$ and $u_i\in [U_i,2U_i)$, Lemma~\ref{LEM:Shiu-simple-case} and \eqref{INEQ:tau_k-first-moment} imply (since $u_1u_2\cdots u_{-i} \cdots u_k \le u_1\cdots u_k\ll x$ and $H/u_i\gg x^{1/3}$)
\begin{equation}
\label{EQN:average-over-sizable-v_i}
\sum_{v_i\le H/u_i} \tau_k(\ee_i v_i + u_1u_2\cdots u_{-i} \cdots u_k) \ll_k (H/U_i)\cdot (1+\log{x})^{k-1};
\end{equation}
cf.~the use of Lemma~\ref{LEM:Shiu-simple-case} and \eqref{INEQ:tau_k-first-moment} in the proof of Lemma~\ref{LEM:average-linear-shift-correlation}.
By \eqref{EQN:average-over-sizable-v_i} (multiplied over $2\le i\le k$) and Lemma~\ref{LEM:average-multi-linear-shift-correlation} (with $V_1=H/U_1$), we conclude that the quantity \eqref{EXPR:upper-bound-for-S(ee,U)} (and thus $S(\vec{\ee},\vec{U})$) is at most
\[ \ll_k \frac{H^{k-1} (1+\log{x})^{(k-1)^2}}{U_2\cdots U_k}
\cdot L_k((H/U_1)\cdot U_2\cdots U_k)
\cdot \max_{\substack{u_2,\dots,u_k\ge 1 \\ u_i\in [U_i, 2U_i)}}
\sum_{\substack{u_1\in [U_1, 2U_1) \\ x < u_1u_2\cdots u_k \le x+H}} \tau_k(u_1). \]
For the innermost sum, first note that $(U_2\cdots U_k)^{1/(k-1)}\le (U_1\cdots U_k)^{1/k} \le (2x)^{1/k}$ which implies that
\begin{equation*}
    H/(u_2\cdots u_k)\gg_k H/(U_2\cdots U_k)\gg_k x^{1-2\ee k^{-1}}/x^{(k-1)/k} \ge x^{1/2k}
\end{equation*}
(since $H\ge x^{1-2\ee k^{-1}}$);
then by Lemma~\ref{LEM:Shiu-simple-case} and \eqref{INEQ:tau_k-first-moment}, we have (for any given $u_2,\dots,u_k$)
\begin{equation*}
    \sum_{\substack{u_1\ge 1 \\ x < u_1u_2\cdots u_k \le x+H}} \tau_k(u_1) \ll_k \frac{H}{U_2\cdots U_k} \cdot (1+\log{x})^{k-1}.
\end{equation*}
It follows that $S(\vec{\ee},\vec{U})$ is at most
\begin{equation*}
   \ll_k \frac{H^{k-1} (1+\log{x})^{(k-1)^2}}{U_2\cdots U_k}
    \cdot \frac{H}{U_1}\cdot U_2\cdots U_k (1+\log{x})^{k^2+2}
    \cdot \frac{H}{U_2\cdots U_k} \cdot (1+\log{x})^{k-1},
\end{equation*}
which simplifies to $O_k(1)\cdot H^k\cdot (H/x)\cdot (1+\log{x})^{2k^2-k+2}$.

Plugging the above estimate into \eqref{INEQ:dyadic-bound-N'_k(x,H)}, we have (assuming $k\ge 2$)
\begin{equation}\label{eqn: N'_k}
  N'_k(x, H) \ll_k O(1+\log x)^{k}\cdot \mathcal{S}(x,H)
  \ll_k H^k \cdot (H/x) \cdot (1+\log{x})^{2k^2+2},
\end{equation}
in the given range of $H$.
Then by using the first part of \eqref{eqn: N_k} (and noting that $N_1(x,H) = N'_1(x,H) = 0$) as before, we have (for arbitrary $k\ge 1$)
\[N_k(x,H) \le k\cdot \max_{1\le j\le k}(O(kH)^{k-j}\cdot N'_j(x,H))\ll_k H^k \cdot (H/x) \cdot (1+\log{x})^{2k^2+2}, \]
which suffices for Theorem~\ref{thm: high moments}.

\section{Proof of Theorem~\ref{thm: Steinhaus}}
\label{SEC:proof-concentration-theorem}

In this section, we prove Theorem~\ref{thm: Steinhaus}.
Let $\rad_k$ be the multiplicative function 
\[
    \rad_k(n) = \min_{n_1\cdots n_k = n} [n_1,\dots,n_k],
\]
where $[n_1,\dots,n_k]$ denotes the least common multiple of $n_1,\dots,n_k$.
In particular, for prime powers $p^\ell$ we have
\begin{equation}
\label{EQN:compute-rad_k(p^ell)-prime-powers}
\rad_k(p^\ell) = p^{\ceil{\ell/k}}.
\end{equation}

Recall that we use $\tau_k(n)$ to denote the $k$-folder divisor function as defined in \eqref{EXPR:k-fold-divisor-sum-upper-bound-for-N'_k(x,H)}.
We begin with the following sequence of lemmas.

\begin{lemma}
\label{LEM:easy-polynomial-problem}
Let $k, y, X, H\ge 1$ be integers.
Then $M_k(X,H;y) := \{(x, t_1, t_2, \dots, t_k) \in [X]\times [H]^{k}: y|(x+t_1)(x+t_2)\cdots (x+t_k)\}$
has size at most $H^k \tau_k(y)\cdot (1 + X/\rad_k(y))$.  
\end{lemma}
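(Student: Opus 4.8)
The plan is to run the greedy-factorization argument already used for Proposition~\ref{PROP:easy-divisor-problem} and Corollary~\ref{Cor: S_k}, but now carrying along the extra variable $x$ and extracting the least common multiple via the Chinese Remainder Theorem. Fix a tuple $(x,t_1,\dots,t_k)\in M_k(X,H;y)$. Each factor $x+t_i\ge 2$ is a positive integer, and since $y\mid (x+t_1)\cdots(x+t_k)$, the explicit greedy choice $u_1:=\gcd(x+t_1,y)$ and $u_i:=\gcd(x+t_i,\,y/(u_1\cdots u_{i-1}))$ for $i\ge 2$ (exactly as in the proof of Proposition~\ref{PROP:easy-divisor-problem}) produces positive integers with $y=u_1u_2\cdots u_k$ and $u_i\mid x+t_i$ for all $i$. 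Hence
\[
\#M_k(X,H;y)\ \le\ \sum_{u_1u_2\cdots u_k=y}\ \#\bigl\{(x,t_1,\dots,t_k)\in[X]\times[H]^k:\ u_i\mid x+t_i\ \text{for } 1\le i\le k\bigr\},
\]
and the outer sum has exactly $\tau_k(y)$ terms.

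Next I would bound each inner count. Fix one factorization $y=u_1\cdots u_k$ and fix $t_1,\dots,t_k\in[H]$. The condition on $x$ is the system of congruences $x\equiv -t_i\ (\mathrm{mod}\ u_i)$ for $1\le i\le k$; by the Chinese Remainder Theorem its solution set is either empty or a single residue class modulo $L:=[u_1,\dots,u_k]$, so the number of admissible $x\in[X]$ is at most $\lceil X/L\rceil\le 1+X/L\le 1+X/\rad_k(y)$, the last step using $L\ge\rad_k(y)$ from the definition of $\rad_k$. Summing over the $H^k$ choices of $(t_1,\dots,t_k)$ bounds each inner count by $H^k\,(1+X/\rad_k(y))$, and then summing over the $\tau_k(y)$ factorizations gives $\#M_k(X,H;y)\ll \tau_k(y)\cdot H^k\cdot (1+X/\rad_k(y))$, which is of the stated form.

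There is no substantial obstacle here; this is a warm-up lemma playing the role in \S\ref{SEC:proof-concentration-theorem} that Proposition~\ref{PROP:easy-divisor-problem} plays in \S\ref{SEC:high-moments-proof}. The only point requiring care is the order of summation: one must fix the shifts $t_1,\dots,t_k$ (and the factorization) \emph{first} and count $x$ \emph{last}, so that the congruence conditions combine through CRT into a single progression of modulus $[u_1,\dots,u_k]$, and hence $\rad_k(y)$ appears naturally; estimating the $x$-count coordinate by coordinate would lose this and only produce the weaker $\prod_i u_i = y$ in the denominator.
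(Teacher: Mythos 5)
Your argument is correct and is essentially the paper's own proof: factor $y=u_1\cdots u_k$ with $u_i\mid x+t_i$, note that for fixed $t_1,\dots,t_k$ and a fixed factorization the congruences on $x$ combine into at most one class modulo $[u_1,\dots,u_k]\ge\rad_k(y)$, and sum over the $\tau_k(y)$ factorizations and the $H^k$ shifts (the paper merely performs these two sums in the opposite order). The only stray remark is your closing aside that a cruder count would put $y$ in the denominator — that would in fact be a \emph{smaller} (unattainable) bound, not a weaker one — but this does not affect the proof.
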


\begin{proof}
Suppose that $y | (x + t_1)\dots (x + t_k)$.
Then there exist integers $y_1,\dots,y_k\ge 1$ with
$y_1\cdots y_k = y$ and $y_i | x + t_i$ ($1\le i\le k$). 

For any given choice of $y_1,\dots,y_k,t_1,\dots,t_k$, the conditions $y_i | x + t_i$, when satisfiable,
impose on $x$ a congruence condition modulo $[y_1,\dots,y_k]$. It follows that for any given $t_1,\dots,t_k$,
the number of values of $x\in [X]$ with $(x,t_1,\dots,t_k)\in M_k(X,H;y)$ is at most
\[
    \sum_{y_1\cdots y_k = y} (1 + X/[y_1,\dots,y_k])
    \le \tau_k(y)\cdot (1 + X/\rad_k(y)).
\]
Lemma~\ref{LEM:easy-polynomial-problem} follows upon summing over 
$t_1,\dots,t_k\in [H]$.
\end{proof}
\begin{remark}
For a typical value of $y\le X$, Lemma~\ref{LEM:easy-polynomial-problem} saves a factor of roughly $y$ over the trivial bound $H^k X$, even if $H\le X^{1-\delta}$, say. Lemma~\ref{LEM:easy-polynomial-problem} is close to optimal on average over $y\le X$, as one can prove by considering prime values of $y$, for instance.
In some regimes, one can do better by other arguments:
one can first fix a choice of $y_i$ (then select $x$  and choose $t_i\equiv -x \mod y_i$) to get
\[|M_k(X,H;y)|
\le \sum_{y_1\cdots y_k = y} X\prod_{i}(1 + H/y_i)
\le \tau_k(y) X \max_{y_1\cdots y_k=y} \prod_{i}(1 + H/y_i), \]
which beats Lemma~\ref{LEM:easy-polynomial-problem} when $H\ge y$ and $y/\rad_k(y)$ is large, but not in general.
\end{remark}

\begin{lemma}\label{Lem: yth}
Let $k, y, X, H\ge 1$ be integers.
Then $B_k(X,H;y)$, the set of integer tuples $(x, t_1, \dots, t_k, h_1, \dots, h_k) \in [X]\times [H]^k \times [-H, H]^k$ with $y|(x+t_1)(x+t_2)\cdots (x+t_k)h_1h_2\cdots h_k$ and $h_1h_2\cdots h_k\neq 0$,
has size at most $O(H)^{2k} \cdot \tau_2(y)\tau_k(y)^2\cdot O(1 + X/\rad_k(y))$.
\end{lemma}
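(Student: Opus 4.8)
\emph{Proof plan.} The idea is to decouple the divisibility condition $y\mid (x+t_1)\cdots(x+t_k)\,h_1\cdots h_k$ into one condition on $(x,t_1,\dots,t_k)$ and one on $(h_1,\dots,h_k)$, and then quote Lemma~\ref{LEM:easy-polynomial-problem} and a step inside the proof of Proposition~\ref{PROP:easy-divisor-problem} respectively. Given a tuple in $B_k(X,H;y)$, write $A := (x+t_1)\cdots(x+t_k)$ and $B := h_1\cdots h_k\neq 0$; since $y\mid AB$ there is a factorization $y=ab$ into positive integers with $a\mid A$ and $b\mid B$ (for instance $a=\prod_p p^{\min(v_p(y),\,v_p(A))}$, so that $v_p(b)=\max(0,v_p(y)-v_p(A))\le v_p(B)$). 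As the conditions $a\mid\prod_i(x+t_i)$ and $b\mid\prod_i h_i$ involve disjoint sets of variables, summing over the at most $\tau_2(y)$ such factorizations $y=ab$ gives
\begin{equation*}
\#B_k(X,H;y)\ \le\ \sum_{ab=y}\#M_k(X,H;a)\cdot N_k(H;b),
\end{equation*}
with $M_k,N_k$ as in Lemma~\ref{LEM:easy-polynomial-problem} and Proposition~\ref{PROP:easy-divisor-problem}.

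For the first factor I would apply Lemma~\ref{LEM:easy-polynomial-problem}, giving $\#M_k(X,H;a)\le O(H)^{k}\,\tau_k(a)\cdot O(1+X/\rad_k(a))$. For the second factor I would use $N_k(H;b)\le \tau_k(b)(2H)^k/b$, which is exactly the intermediate inequality proved inside Proposition~\ref{PROP:easy-divisor-problem} and holds for \emph{all} $k\ge 1$ with no divisor-bound input: either $b>H^k$, in which case $N_k(H;b)=0$, or $N_k(H;b)\le\sum_{u_1\cdots u_k=b}\prod_i N_1(H;u_i)\le (2H)^k\tau_k(b)/b$. Combining the two estimates,
\begin{equation*}
\#B_k(X,H;y)\ \le\ O(H)^{2k}\sum_{ab=y}\tau_k(a)\,\tau_k(b)\cdot\frac{1}{b}\cdot O\!\Bigl(1+\frac{X}{\rad_k(a)}\Bigr).
\end{equation*}

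It then remains to simplify the divisor sum using multiplicativity. Since $a,b\mid y$ and $j\mapsto\tau_k(p^j)$ is nondecreasing, $\tau_k(a),\tau_k(b)\le\tau_k(y)$. Since $j\mapsto\rad_k(p^j)$ is nondecreasing and $\rad_k$ is submultiplicative (because $\lceil(e+f)/k\rceil\le\lceil e/k\rceil+\lceil f/k\rceil$), one has both $1+X/\rad_k(a)\le\frac{\rad_k(y)}{\rad_k(a)}\bigl(1+X/\rad_k(y)\bigr)$ (using $\rad_k(a)\le\rad_k(y)$) and $\frac{\rad_k(y)}{\rad_k(a)}\le\rad_k(b)\le b$. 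Hence each summand is $\ll\tau_k(y)^2\bigl(1+X/\rad_k(y)\bigr)$, and there are $\tau_2(y)$ of them, which yields the asserted bound $O(H)^{2k}\,\tau_2(y)\tau_k(y)^2\cdot O(1+X/\rad_k(y))$.

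The computation is routine; the one point that needs care—and where a careless argument stalls—is retaining the factor $1/b$ coming from the $h_i$-count, since it is precisely what absorbs the potentially large factor $\rad_k(y)/\rad_k(a)$ produced when $1+X/\rad_k(a)$ is rewritten in terms of $1+X/\rad_k(y)$.
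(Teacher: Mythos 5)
Your proof is correct and follows essentially the same route as the paper: split $y=uv$ (equivalently your $y=ab$) into a condition on $(x,t_1,\dots,t_k)$ handled by Lemma~\ref{LEM:easy-polynomial-problem} and one on $(h_1,\dots,h_k)$ handled by the unconditional intermediate bound $N_k(H;b)\le\tau_k(b)(2H)^k/b$ from the proof of Proposition~\ref{PROP:easy-divisor-problem}, then sum over the $\tau_2(y)$ factorizations. The only difference is that you spell out the bookkeeping the paper leaves implicit, in particular that the factor $1/b$ together with submultiplicativity of $\rad_k$ is what converts $1+X/\rad_k(a)$ into $O(1+X/\rad_k(y))$, which is exactly the right observation.
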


\begin{proof}
We write $y=uv$ with $u|(x+t_1)(x+t_2)\cdots (x+t_k)$ and $v|h_1h_2\cdots h_k$ (where $u,v\ge 1$).
The number of choices of $(u, v)$ is $\le \tau_2(y)$.
Using the notation in Lemma~\ref{LEM:easy-polynomial-problem} and Proposition~\ref{PROP:easy-divisor-problem}, we then find that
\[ |B_k(X,H;y)|
\le \sum_{uv=y} |M_k(X, H; u)| \cdot N_k(H; v)
\le \tau_2(y) \max_{uv=y} |M_k(X, H; u)| \cdot N_k(H; v). \]
Now for any fixed $u, v$, we apply Lemma~\ref{LEM:easy-polynomial-problem} to bound $|M_k(X, H; u)|$ and \eqref{INEQ:key-combo-ineq-from-easy-divisor-problem} to bound $N_k(H; v)$, getting
\[|M_k(X, H; u)|\le H^{k} \tau_k(u)\cdot (1+X/\rad_k(u))
\quad \text{and} \quad N_k(H; v) \le (2H)^{k} \tau_k(v)/v,  \]
respectively.
This leads to the total bound
\[|B_k(X,H;y)|\ll \tau_2 (y) H^{2k} \tau_k(y)^{2} \cdot \l(1+ \frac{X}{v\rad_k(u)}\r). \]
For any $uv=y$, we have
\[v\rad_k(u) \ge \rad_k(y),  \]
by the multiplicativity of $\rad_k$, the formula \eqref{EQN:compute-rad_k(p^ell)-prime-powers}, and the inequality $p^{\ell_2} p^{\ceil{\ell_1/k}} \ge p^{\ceil{(\ell_1+\ell_2)/k}}$ (valid for all primes $p$ and integers $\ell_1,\ell_2\ge 0$).
Thus we complete the proof.
\end{proof}

If we allowed $h_1h_2\cdots h_k=0$, we would have $X\cdot O(H)^{2k-1}$ tuples in $B_k(X,H;y)$.
Lemma~\ref{Lem: yth} gives a relative saving of roughly $y/H$ on average over $y\ll X$;
this follows from (the proof of) Lemma~\ref{Lem: average yx} below, whose proof requires the following lemma.

\begin{lemma}
\label{LEM:local-Euler-factor-computation}
Let $K,k\ge 2$ be integers.
For integers $i\ge 1$, let
\[  c_i := \sum_{(i - 1)k < j \le ik}\binom{j + K - 1}{K - 1}.
\]
Then $c_i\le K^K (ik)^K$.
Furthermore, for all primes $p$ and reals $s>1$, we have
\begin{equation*}
\sum_{j\ge 1} \tau_{K}(p^j)\frac{p^j}{\rad_k(p^j)} p^{-js}
\le 1 + \frac{c_1}{p^s} + \frac{c_2}{p^{2s}} + \cdots.
\end{equation*}
\end{lemma}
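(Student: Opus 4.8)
The plan is to reduce both assertions to an explicit computation on prime powers $n = p^j$, since the statement is purely local. First I would record the two ingredients: $\tau_K(p^j) = \binom{j+K-1}{K-1}$ (the number of non-negative solutions of $a_1+\cdots+a_K = j$), and $\rad_k(p^j) = p^{\lceil j/k\rceil}$. The latter holds because any factorization $p^j = m_1\cdots m_k$ has $m_i = p^{a_i}$ with $a_i\ge 0$ and $a_1+\cdots+a_k = j$, so that $[m_1,\dots,m_k] = p^{\max_i a_i}$; here $\max_i a_i\ge \lceil j/k\rceil$ always (the $a_i$ sum to $j$) and equality is attained by spreading the exponents as evenly as possible. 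Substituting, the general term on the left-hand side becomes $\binom{j+K-1}{K-1}\,p^{-\lceil j/k\rceil - j(s-1)}$.

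For the displayed inequality, I would partition the sum over $j\ge 1$ into blocks according to the value $i := \lceil j/k\rceil$, so that $j$ ranges over $(i-1)k < j\le ik$, which is precisely the range in the definition of $c_i$. The key elementary observation is that on this block $j\ge (i-1)k+1\ge i$, since $(i-1)k+1-i = (i-1)(k-1)\ge 0$; this is where the hypothesis $k\ge 2$ enters. Hence, since $s>1$, each such $j$ satisfies $p^{-\lceil j/k\rceil - j(s-1)} = p^{-i}p^{-j(s-1)}\le p^{-i}p^{-i(s-1)} = p^{-is}$. Summing $\binom{j+K-1}{K-1}$ over the block produces $c_i$, so the block contributes at most $c_i p^{-is}$; summing over $i\ge 1$ (and discarding the free ``$1$'' on the right-hand side) gives exactly the claim.

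For the bound $c_i\le K^K(ik)^K$, I would use the hockey-stick identity $\sum_{0\le j\le m}\binom{j+K-1}{K-1} = \binom{m+K}{K}$ to write $c_i = \binom{ik+K}{K} - \binom{(i-1)k+K}{K}\le \binom{ik+K}{K}\le (ik+K)^K/K!\le (ik+K)^K$, and then note $ik+K\le ikK$ because $ik(K-1)-K\ge 2(K-1)-K = K-2\ge 0$ (using $ik\ge 2$ and $K\ge 2$); this yields $c_i\le (ikK)^K = K^K(ik)^K$.

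I do not expect a genuine obstacle here: the only points requiring care are getting the formula $\rad_k(p^j) = p^{\lceil j/k\rceil}$ right and verifying the inequality $j\ge \lceil j/k\rceil$ on each block (which is exactly where the restrictions $k,K\ge 2$ are used), after which both parts follow by term-by-term comparison and a crude binomial estimate.
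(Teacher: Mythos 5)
Your proof is correct and matches the paper's argument essentially step for step: the same identification $\tau_K(p^j)=\binom{j+K-1}{K-1}$, $\rad_k(p^j)=p^{\lceil j/k\rceil}$, the same blocking by $i=\lceil j/k\rceil$ with the observation $j\ge i$ (so each block contributes at most $c_i p^{-is}$ when $s>1$), and the same binomial bound $c_i\le\binom{ik+K}{K}\le(ik+K)^K\le K^K(ik)^K$. The only cosmetic difference is that you compute $c_i$ as a difference of two hockey-stick sums, while the paper simply bounds it by the full sum over $0\le j\le ik$; this changes nothing.
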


\begin{proof}
The first part is clear, since $c_i\le \sum_{0\le j\le ik} \binom{j+K-1}{K-1} = \binom{ik + K}{K} \le (K+ik)^K \le K^K (ik)^K$ (since $K,k\ge 2$).
The second part follows from the inequality
\begin{equation*}
\sum_{(i - 1)k < j \le ik} \frac{\tau_K(p^j) p^j}{\rad_k(p^j) p^{js}}
= \sum_{(i - 1)k < j \le ik} \frac{\binom{j+K-1}{K-1}}{p^{\ceil{j/k}} p^{j(s-1)}}
\le \sum_{(i - 1)k < j \le ik} \frac{\binom{j+K-1}{K-1}}{p^{i} p^{i(s-1)}}
= \frac{c_i}{p^{is}},
\end{equation*}
which holds because we have $\ceil{j/k} = i$ and $j\ge i$ whenever $(i - 1)k < j \le ik$.
\end{proof}

It turns out that to prove the key Lemma~\ref{LEM:avg-cross-count} (below) for Theorem~\ref{thm: Steinhaus}, we need a bound of the form \eqref{INEQ:tau-rad_k-hybrid-log-moment-bound}. 

\begin{lemma}\label{Lem: average yx}
Let $k,X,H\ge 1$ be integers with $X$ large and $H\le X$. There exists a positive integer $\mathcal{C}_k = O(k^{O(k^{O(k)})})$ (depending only on $k$) such that the following holds:
\begin{equation}
\label{INEQ:tau-rad_k-hybrid-log-moment-bound}
\E_{x\in [X]} \sum_{y\in (x,x+H]} \tau_{2k}(y)^{2k}
\cdot \tau_2(y)\tau_k(y)^2\cdot (1 + X/\rad_k(y))
\ll_k H (\log{X})^{\mathcal{C}_k}.
\end{equation}
\end{lemma}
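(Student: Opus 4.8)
The plan is to reduce the bound to a single sum over $y$ with a suitable multiplicative weight, and then estimate that weighted sum by Shiu's theorem (Lemma~\ref{LEM:Shiu-simple-case}) after checking the weight lies in some class $\mathcal{M}(A_1, A_2)$.

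First I would interchange the order of summation: writing $g(y) := \tau_{2k}(y)^{2k}\cdot \tau_2(y)\tau_k(y)^2$, the left-hand side of \eqref{INEQ:tau-rad_k-hybrid-log-moment-bound} equals
\[
\frac{1}{X}\sum_{x\in[X]}\sum_{y\in(x,x+H]} g(y)\bigl(1 + X/\rad_k(y)\bigr)
= \frac{1}{X}\sum_{y\le X+H}\#\{x\in[X]: x<y\le x+H\}\cdot g(y)\bigl(1+X/\rad_k(y)\bigr).
\]
Since $\#\{x\in[X]: x<y\le x+H\}\le H$ for every $y$, this is at most $\frac{H}{X}\sum_{y\le 2X} g(y)(1+X/\rad_k(y))$, so it suffices to prove
\[
\sum_{y\le 2X} g(y)\biggl(1 + \frac{X}{\rad_k(y)}\biggr) \ll_k X(\log X)^{\mathcal{C}_k}.
\]
The ``$1$'' term contributes $\sum_{y\le 2X} g(y)$, which is a standard mean value of a multiplicative function bounded by a fixed power of $\tau$: since $\tau_{2k}(y)^{2k}\tau_2(y)\tau_k(y)^2 \le \tau_M(y)$ for some $M = M(k)$ by repeated use of \eqref{INEQ:tau_k-sub-multiplicative} and $\tau_a(y)\le \tau_b(y)$ for $a\le b$, this is $\ll_k X(\log X)^{M-1}$ by \eqref{INEQ:tau_k-first-moment}. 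The genuinely new term is $X\sum_{y\le 2X} g(y)/\rad_k(y)$, so I would focus there.

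For the main term, the key point is that $h(y) := g(y)/\rad_k(y) = \tau_{2k}(y)^{2k}\tau_2(y)\tau_k(y)^2/\rad_k(y)$ is a non-negative multiplicative function, and Lemma~\ref{LEM:local-Euler-factor-computation} (applied with $K = $ an appropriate fixed function of $k$, after bounding $\tau_{2k}(y)^{2k}\tau_2(y)\tau_k(y)^2 \le \tau_K(y)$) shows that $\sum_{j\ge1} h(p^j)p^{-js}$ is dominated termwise by $1 + c_1 p^{-s} + c_2 p^{-2s}+\cdots$ with $c_i\le K^K(ik)^K$; in particular $h(p^j)\le c_j \le K^K(jk)^K$, so $h \in \mathcal{M}(A_1, A_2)$ for suitable $A_1 = A_1(k)$, $A_2 = A_2(k,\epsilon)$ (the polynomial growth $h(p)\le c_1$ gives the $A_1^\ell$-type bound on prime powers after absorbing the polynomial factor, and $h(n)\ll_\epsilon n^\epsilon$ follows from the divisor bound \eqref{INEQ:k-fold-divisor-bound}). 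Then Lemma~\ref{LEM:Shiu-simple-case} (or directly the elementary estimate behind \eqref{INEQ:tau_k-first-moment}, which only needs $h\in\mathcal{M}$) gives
\[
\sum_{y\le 2X} h(y) \ll_k \frac{X}{\log X}\exp\biggl(\sum_{p\le 2X}\frac{h(p)}{p}\biggr) \ll_k X(\log X)^{c_1 - 1},
\]
since $h(p) = c_1$ for all $p$ (as $\rad_k(p) = p$ and $\tau_a(p) = a$), and $c_1 = \sum_{0<j\le k}\binom{j+K-1}{K-1} \le K^K k^K = O(k^{O(k^{O(k)})})$. Taking $\mathcal{C}_k := c_1$ (or $\max(c_1, M)$ to absorb the ``$1$'' term) finishes the proof, and a crude accounting of $K = K(k)$ — $K$ is on the order of $k^{O(k)}$ after combining $\tau_{2k}^{2k}\tau_2\tau_k^2$ into a single $\tau_K$ — gives the claimed size $\mathcal{C}_k = O(k^{O(k^{O(k)})})$.

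The main obstacle I anticipate is purely bookkeeping rather than conceptual: pinning down an explicit admissible $K = K(k)$ with $\tau_{2k}(y)^{2k}\tau_2(y)\tau_k(y)^2 \le \tau_K(y)$ for all $y$, and then tracking $c_1 = \binom{k+K}{K} - 1$ through to confirm $\mathcal{C}_k = O(k^{O(k^{O(k)})})$. The inequality $\tau_a(y)\tau_b(y)\le \tau_{ab}(y)$ at prime powers (from comparing $\binom{j+a-1}{a-1}\binom{j+b-1}{b-1}$ with $\binom{j+ab-1}{ab-1}$) reduces this to choosing $K$ so that $(2k)^{2k}\cdot 2\cdot k^2 \le K$ suffices at the level of generators; the resulting $K$ is of size $(2k)^{2k+O(1)}$, and then $c_1 \le (K+k)^K = \exp(O(K\log K)) = O(k^{O(k^{O(k)})})$, matching the statement. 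One should double-check that Lemma~\ref{LEM:Shiu-simple-case}'s hypotheses are met — here I apply it with $y = 2X$, $x = $ something like $X$, so $x^\beta \le y \le x$ is fine for e.g.\ $\beta = 1/2$ — or simply invoke the elementary upper bound for $\sum_{n\le x} h(n)$ valid for any $h\in\mathcal{M}(A_1,A_2)$, which sidesteps the interval hypothesis entirely.
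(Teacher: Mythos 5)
Your reduction (swapping the order of summation, bounding the number of admissible $x$ for each $y$ by $H$, combining $\tau_{2k}^{2k}\tau_2\tau_k^2\le\tau_K$ into a single divisor function, and disposing of the ``$1$'' term via \eqref{INEQ:tau_k-first-moment}) matches the paper. The gap is in the main term $X\sum_{y\le 2X} g(y)/\rad_k(y)$: what you need there is the purely polylogarithmic bound $\sum_{y\le 2X} g(y)/\rad_k(y)\ll_k (\log X)^{\mathcal{C}_k}$, but the Shiu-type estimate you invoke cannot produce it, and the bound you actually state, $\sum_{y\le 2X}h(y)\ll_k X(\log X)^{c_1-1}$ with $h=g/\rad_k$, is off by a factor of $X$ from what the lemma requires (plugging it back in gives $X\cdot X(\log X)^{c_1-1}$ for the main term, i.e.\ a final bound of order $HX(\log X)^{c_1-1}$ rather than $H(\log X)^{\mathcal{C}_k}$). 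Moreover the input to Shiu is wrong: with your $h=g/\rad_k$ one has $h(p)=g(p)/p=o(1)$, not $c_1$, so Lemma~\ref{LEM:Shiu-simple-case} correctly applied only yields $\sum_{y\le 2X}h(y)\ll_k X/\log X$, which is blind to the essential savings coming from $\rad_k(y)$ being typically close to $y$. If instead you intended $h(y)=g(y)\,y/\rad_k(y)$ (the function whose local factors Lemma~\ref{LEM:local-Euler-factor-computation} actually controls), then Shiu and the class $\mathcal{M}(A_1,A_2)$ are inapplicable: $h(p^{\ell})\asymp_k p^{\ell-\lceil \ell/k\rceil}$ grows polynomially in $p$ (e.g.\ $h(p^k)\asymp_k p^{k-1}$), so neither $h(p^\ell)\le A_1^\ell$ nor $h(n)\le A_2 n^{\epsilon}$ holds.

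The missing step is exactly where the paper goes after Lemma~\ref{LEM:local-Euler-factor-computation}: Rankin's trick gives $\sum_{y\le 2X} \tau_K(y)/\rad_k(y)\ll \sum_{n\ge1}\tau_K(n)\frac{n}{\rad_k(n)}n^{-1-1/\log X}$, the Dirichlet series factors as an Euler product whose local factors are dominated by $1+c_1p^{-s}+c_2p^{-2s}+\cdots$, the tail $\sum_{i\ge 2}c_ip^{-is}\ll K^{4K}p^{-2}$ is summable, and the product is $\ll \prod_p(1+p^{-s})^{c_1}\cdot O_k(1)\ll(\log X)^{c_1}$ at $s=1+1/\log X$. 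You cite the right local computation but then route it into a mean-value theorem whose hypotheses fail (or, applied to a function that does satisfy them, whose conclusion is too weak by a factor of roughly $X$), so the key polylogarithmic estimate for $\sum_{y\le 2X} g(y)/\rad_k(y)$ is never established; replacing the Shiu step by the Rankin--Euler-product argument would repair the proof.
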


\begin{proof}
The case $k = 1$ is clear by \eqref{INEQ:tau_k-first-moment} (since $\rad_1(y) = y$), so suppose $k\ge 2$ for the remainder of this proof.
Let $K := (2k)^{2k}\cdot 2 k^2\le k^{4k+3}$.
Then $\tau_{2k}(y)^{2k} \tau_2(y)\tau_{k}(y)^2 \le \tau_{K}(y)$, since for all integers $j_1,j_2\ge 1$ we have $\tau_{j_1}(y)\tau_{j_2}(y)\le \tau_{j_1j_2}(y)$ by \cite[(3.2)]{BNR}.
By Rankin's trick, the left-hand side of \eqref{INEQ:tau-rad_k-hybrid-log-moment-bound} is therefore at most $H$ times
\begin{equation*}
\sum_{y\le x+H} \tau_{K}(y) \cdot (X^{-1} + \rad_k(y)^{-1})
\ll_K (\log{X})^{K-1} + \sum_{n\ge 1} \tau_{K}(n)\frac{n}{\rad_k(n)} n^{-1 - 1/\log X}.
\end{equation*}
By Lemma~\ref{LEM:local-Euler-factor-computation} and the multiplicativity of $\tau_{K}$ and $\rad_{k}$, we find that for $s > 1$, we have
\begin{equation}\label{eqn: rad_euler_product}
    \sum_{n\ge 1} \tau_{K}(n)\frac{n}{\rad_k(n)} n^{-s}
    \le \prod_{p\ge 2} \biggl(1 + \frac{c_1}{p^s} + \frac{c_2}{p^{2s}} + \cdots\biggr),
\end{equation}
where $c_i\le K^K (ik)^K \le K^{2K} (2K)^K 2^{i/2}$ (since $k\le K$ and $i^K/2^{i/2} \le (2K/\log{2})^K/e^K$, and $e\log{2}\ge 1$).
But then
\[
    \frac{c_2}{p^2} + \frac{c_3}{p^3} + \dots\ll\frac{K^{4K}}{p^2}.
\]
Therefore, the right-hand side of \eqref{eqn: rad_euler_product} is at most
\[
    \prod_{p\ge 2} \biggl(1 + \frac{1}{p^s}\biggr)^{\!c_1}
    \prod_{p\ge 2}\biggl(1 + \frac{1}{p^2}\biggr)^{\!O(K^{4K})}.
\]
After plugging in $s = 1+1/\log{X}$ and the bound $c_1\le K^{2K}$, Lemma~\ref{Lem: average yx} follows.
\end{proof}

We also need a simple but finicky combinatorial estimate.

\begin{lemma}
\label{LEM:count-equal-sets-without-multiplicities}
Let $k,x,H\ge 1$ be integers.
Let $\mathcal{A}_{1,2}(x,H)$ be the number of tuples $(a_1,\dots,a_{2k})\in (x,x+H]^{2k}$ satisfying both
\begin{enumerate}
    \item $\{a_1,\dots,a_k\} = \{a_{k+1},\dots,a_{2k}\}$ (in the usual sense, without multiplicities), and
    
    \item $a_1\cdots a_k = a_{k+1}\cdots a_{2k}$.
\end{enumerate}
Let $\mathcal{A}_1(x,H)$ be the number of tuples $(a_1,\dots,a_{2k})\in (x,x+H]^{2k}$ satisfying (1) (but not necessarily (2)).
Then $\mathcal{A}_{1,2}(x,H) \ge k!H^{k} - O_k(H^{k-1})$ and $\mathcal{A}_1(x,H)\le k!H^{k} + O_k(H^{k-1})$.
\end{lemma}

\begin{proof}
Call a tuple $(a_1,\dots,a_{2k})\in (x,x+H]^{2k}$ good if it satisfies (1).
Let $\mathcal{A}_1^\star$ be the number of good tuples where $a_1,\dots,a_k$ are pairwise distinct.
Let $\mathcal{A}_1^\dagger$ be the number of remaining good tuples, namely good tuples where $\prod_{1\le i<j\le k} (a_i-a_j) = 0$.
Then $\mathcal{A}_1 \le \mathcal{A}_1^\star + \mathcal{A}_1^\dagger$.

Clearly $\mathcal{A}_1^\star = k! H(H-1)\cdots (H-k+1)$ (since when the $a_i$ are all different for $1\le i\le k$, condition~(1) implies that $(a_{k+1},\dots,a_{2k})$ is a permutation of $(a_1,\dots,a_k)$;
and conversely, when $(a_{k+1},\dots,a_{2k})$ is a permutation of $(a_1,\dots,a_k)$, both (1) and (2) hold).
Furthermore, $\mathcal{A}_{1,2} \ge \mathcal{A}_1^\star$.

On the other hand, $\mathcal{A}_1^\dagger \le \binom{H}{k-1} \cdot (k-1)^{2k}$ (since if $\prod_{1\le i<j\le k} (a_i-a_j) = 0$, then $\{a_1,\dots,a_k\}$ must lie in some $(k-1)$-element subset $S\subseteq (x,x+H]$,
and then condition~(1) implies that each of $a_1,\dots,a_{2k}$ is an element of $S$).

We now know $\mathcal{A}_1^\star = k!H^k + O_k(H^{k-1})$
and $\mathcal{A}_1^\dagger\ll_k H^{k-1}$.
So $\mathcal{A}_{1,2} \ge \mathcal{A}_1^\star \ge k!H^k - O_k(H^{k-1})$,
and $\mathcal{A}_1 \le \mathcal{A}_1^\star + \mathcal{A}_1^\dagger \le k!H^k + O_k(H^{k-1})$.
\end{proof}

Given integers $x_1,x_2,H\ge 1$, let $I_j = (x_j, x_j +H]$ for $j\in \{1,2\}$.
We are now ready to estimate the size of the set
\begin{equation}\label{SET: cross solutions}
    \begin{split}
\{ (n_1, n_2,\dots, n_{2k}; m_1, m_2,\dots, m_{2k}) &\in I_1^{2k}\times I_2^{2k}:\\ & n_1\cdots n_k m_1\cdots m_k = n_{k+1} \cdots n_{2k} m_{k+1}\cdots m_{2k} \}.
\end{split}
\end{equation}

\begin{lemma}
\label{LEM:avg-cross-count}
Fix an integer $k\ge 1$;
let $\mathcal{C}_k$ be as in Lemma~\ref{Lem: average yx}.
Let $X,H$ be large integers with $H:= H(X) \to +\infty$ as $X\to +\infty$. 
Suppose $H\ll X(\log X)^{-2\mathcal{C}_k}$.
Then in expectation over $x_1,x_2\in [X]$, the size of the set \eqref{SET: cross solutions} is $k!^2H^{2k} + o_{X\to +\infty}(H^{2k})$.
\end{lemma}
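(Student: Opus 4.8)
The statement asks for the expected number of tuples in the "cross" equation $n_1\cdots n_k m_1\cdots m_k = n_{k+1}\cdots n_{2k}m_{k+1}\cdots m_{2k}$ with $n_i\in I_1$, $m_i\in I_2$, to be $k!^2 H^{2k}+o(H^{2k})$ on average over $x_1,x_2\in[X]$. The lower bound is immediate: for \emph{any} $x_1,x_2$, the "fully diagonal" solutions — where $\{n_1,\dots,n_k\}=\{n_{k+1},\dots,n_{2k}\}$ as multisets and likewise for the $m$'s — already number $k!^2 H(H-1)\cdots(H-k+1)\cdot H(H-1)\cdots(H-k+1) = k!^2 H^{2k} + O_k(H^{2k-1})$. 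So the whole content is the matching \emph{upper} bound: I must show that the number of \emph{non-fully-diagonal} solutions is $o(H^{2k})$ in expectation over $x_1,x_2$.

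First I would set up the decomposition exactly as in \S\ref{SEC:high-moments-proof}. Fix $x_1,x_2$. Call a solution "bad" if it is not fully diagonal. Splitting on whether $n_{2k}$ coincides with one of $n_1,\dots,n_k$ and whether $m_{2k}$ coincides with one of $m_1,\dots,m_k$, and using the recursion \eqref{INEQ:recursion} in each coordinate block, it suffices to bound — up to factors of $O(kH)$ times counts of cross-solutions in smaller $k$ — the count of bad solutions satisfying both $n_{2k}\notin\{n_1,\dots,n_k\}$ and $m_{2k}\notin\{m_1,\dots,m_k\}$ (and then, separately, the "mixed" case where one side is diagonal but the other is not, which reduces to the main Diophantine bound of Theorem~\ref{thm: high moments} in one block times $H^k$ trivially in the other). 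For the main case, writing $h_i=n_i-n_{2k}\in[-H,H]\setminus\{0\}$ and $g_i=m_i-m_{2k}\in[-H,H]\setminus\{0\}$, the equation forces
\[
 n_{2k}m_{2k}\ \big|\ \prod_{i=1}^{k}(n_i-n_{2k})\cdot\prod_{i=1}^k(m_i-m_{2k}) \cdot (\text{a bounded product correction}),
\]
more precisely $n_{2k}m_{2k}\mid (n_{2k}+h_1)\cdots(n_{2k}+h_{k-1})(m_{2k}+g_1)\cdots(m_{2k}+g_{k-1})h_{k}g_{k}$-type products after clearing; in any case the divisor $y:=n_{2k}m_{2k}$ runs over products of one element of $I_1$ and one element of $I_2$, and once $y$ and the $h_i,g_i$ are fixed the remaining variables $n_{k+1},\dots,n_{2k-1},m_{k+1},\dots,m_{2k-1}$ are constrained to divide a fixed integer of size $\asymp X^{O(k)}$, contributing $\tau_{O(k)}(\cdot)^{O(1)}\ll (\log X)^{O_k(1)}$ by the divisor bound \eqref{INEQ:k-fold-divisor-bound}. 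This is precisely the shape counted by $B_k$ in Lemma~\ref{Lem: yth}, with the extra $\tau_{2k}(y)^{2k}$ factor absorbing the divisor-function weights from the freed variables; summing over the configuration via Lemma~\ref{Lem: average yx} (applied with $y=n_{2k}m_{2k}$, which one reduces to a single interval average after a change of variables, or by a direct two-interval analogue proved the same way) gives a bound of the form $H^{2k}\cdot (\log X)^{\mathcal{C}_k}/X \cdot H = H^{2k}\cdot H(\log X)^{\mathcal{C}_k}/X$, which is $o(H^{2k})$ exactly under the hypothesis $H\ll X(\log X)^{-2\mathcal{C}_k}$ (the factor $2$ in the exponent providing the needed room, and one more $\log$-power of slack left over).

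Then I would run the induction: combining the main-case bound with \eqref{INEQ:recursion} and the observation that the "mixed" (one block diagonal, other not) contribution is at most $k!H^k$ times $N_k(\text{one block})\ll H^k(H/x)(\log)^{E(k)}$ from Theorem~\ref{thm: high moments} — which is even smaller — we get that the total bad count is $\ll_k H^{2k}\cdot H(\log X)^{O_k(1)}/X = o_{X\to+\infty}(H^{2k})$ in expectation. Adding the $k!^2 H^{2k}+O_k(H^{2k-1})$ diagonal solutions finishes the proof. \textbf{The main obstacle} is bookkeeping the divisibility carefully enough that the relevant integer $y=n_{2k}m_{2k}$ genuinely ranges over a product of two short intervals and that the freed divisor variables only cost $(\log X)^{O_k(1)}$ — i.e., checking that Lemma~\ref{Lem: average yx}, stated for a single interval $(x,x+H]$, really does control the two-interval sum $\E_{x_1,x_2}\sum_{y_1\in I_1,y_2\in I_2}(\cdots)$; this follows because one can fix $m_{2k}=y_2\in I_2$, absorb $\tau_{O(k)}(y_2)$ and $1+X/\rad_k(y_2)$ into the weight, and apply Lemma~\ref{Lem: average yx} to the $I_1$ sum, at the cost of at most doubling $\mathcal{C}_k$, which is exactly why the hypothesis carries a $2\mathcal{C}_k$ in the exponent.
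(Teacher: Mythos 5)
Your overall skeleton (diagonal main term $k!^2H^{2k}+O_k(H^{2k-1})$, then a congruence/divisibility count for the off-diagonal solutions fed into Lemma~\ref{Lem: yth} and Lemma~\ref{Lem: average yx}) is the right one, but there is a genuine gap at the decisive step: controlling the ``freed'' variables $n_{k+1},\dots,n_{2k-1},m_{k+1},\dots,m_{2k}$ once $n_1,\dots,n_k,m_1,\dots,m_k$ and $n_{2k}$ are fixed. You claim they contribute $\tau_{O(k)}(\cdot)^{O(1)}\ll(\log X)^{O_k(1)}$ ``by the divisor bound \eqref{INEQ:k-fold-divisor-bound}'', but that bound only gives $n^{O(\log k/\log\log n)}=X^{o(1)}$, not a power of $\log X$; pointwise, the freed variables are divisors of an integer of size about $X^{2k}$ and can number $\exp(c\log X/\log\log X)$. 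An $X^{o(1)}$ loss is fatal here, because in the range $H\asymp X/(\log X)^{O_k(1)}$ the whole saving is only a power of $\log X$ (this crude bound is exactly what confines Case~1 of Theorem~\ref{thm: high moments} to $H\le x^{1-\ee/k}$). Your fallback remark that the $\tau_{2k}(y)^{2k}$ weight in Lemma~\ref{Lem: average yx} ``absorbs the divisor-function weights from the freed variables'' is precisely the point that needs an argument: those divisor weights are a priori functions of the product $n_1\cdots n_k m_1\cdots m_k$ (i.e.\ of the \emph{other} variables), not of $y=n_{2k}$, so they cannot be fed into Lemma~\ref{Lem: average yx} as stated. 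The paper bridges this by a symmetry/maximality device: it restricts to solutions in which every ``non-diagonal'' value $u$ satisfies $\tau_{2k}(u)\le\tau_{2k}(n_{2k})$ (at the cost of a factor $2k$ from casework on the maximizer, applied to both $N^\star_k(I_1,I_2)$ and $N^\star_k(I_2,I_1)$), and then bounds each fiber of the forgetful projection by $(k+\tau_{2k}(n_{2k}))^{2k}$, which is what puts the weight $\tau_{2k}(y)^{2k}$ on the single short-interval variable $y=n_{2k}$ so that Lemma~\ref{Lem: average yx} applies. Without some such mechanism your estimate does not close.

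Two secondary points. First, your divisibility set-up with $y=n_{2k}m_{2k}$ is both murky (the asserted relation $n_{2k}m_{2k}\mid\prod(n_i-n_{2k})\prod(m_i-m_{2k})\cdot(\cdots)$ is not a clean consequence of the equation) and unnecessary: the paper reduces only modulo $y=n_{2k}\in(x_1,x_1+H]$, getting $y\mid h_1\cdots h_k\,(x_2+t_1)\cdots(x_2+t_k)$ with all $h_i\neq0$, which is exactly $B_k(X,H;y)$ with the average over $x_2$ supplying the $1+X/\rad_k(y)$ saving — no two-interval analogue of Lemma~\ref{Lem: average yx} is needed, and the $2\mathcal{C}_k$ in the hypothesis is just slack rather than a ``doubling'' coming from a second interval. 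Second, your recursion via \eqref{INEQ:recursion} applied blockwise produces asymmetric cross-equations (e.g.\ $k-1$ versus $k$ variables per block) that are covered neither by the lemma's statement nor by Theorem~\ref{thm: high moments}, and the ``mixed'' case does not factor into one block times $H^k$ unless a block is fully diagonal; this architecture could probably be repaired, but the paper's single pass through the sets $T^\star_k$, $N^\star_k$ avoids it entirely.
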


\begin{proof}
We roughly follow \S\ref{SEC:high-moments-proof}'s proof of Theorem~\ref{thm: high moments};
however, the present situation is more complicated in some aspects, which we address using some new symmetry tricks.

First, let $T^\star_k(I_1,I_2)$ be the subset of \eqref{SET: cross solutions} satisfying the following conditions:
\begin{enumerate}
    \item If $u\in \{m_{k+1},\dots,m_{2k}\}$,
    then $u\in \{m_1,\dots,m_k\}$.
    
    \item If $u\in \{m_1,\dots,m_k\}$,
    then $u\in \{m_{k+1},\dots,m_{2k}\}$.
    
    \item If $u\in \{n_{k+1},\dots,n_{2k}\}$,
    then $u\in \{n_1,\dots,n_k\}$.
    
    \item If $u\in \{n_1,\dots,n_k\}$,
    then $u\in \{n_{k+1},\dots,n_{2k}\}$.
\end{enumerate}
In the notation of Lemma~\ref{LEM:count-equal-sets-without-multiplicities}, applied with $a=m$ and $a=n$ (separately), we have $\#T^\star_k(I_1,I_2) \ge \mathcal{A}_{1,2}(x_1,H) \mathcal{A}_{1,2}(x_2,H)$ and $\#T^\star_k(I_1,I_2) \le \mathcal{A}_1(x_1,H) \mathcal{A}_1(x_2,H)$, so
\begin{equation}
\label{INEQ:trivial-type-solution-estimate}
\#T^\star_k(I_1,I_2) = (k!H^{k} + O_k(H^{k-1}))^2 = k!^2H^{2k} + O_k(H^{2k-1}).
\end{equation}

In general, given an element $\mathfrak{n}\in I_1^{2k}\times I_2^{2k}$ of \eqref{SET: cross solutions}, let $\mathcal{U}$ be the set of integers $u$ that violate at least one of the conditions (1)--(4) above.
Then $\mathfrak{n}\in T^\star_k(I_1,I_2)$ if and only if $\mathcal{U} = \emptyset$.
This simple observation will help us estimate the size of \eqref{SET: cross solutions}.

Let $N^\star_k(I_1,I_2)$ be the subset of \eqref{SET: cross solutions} satisfying the following conditions:
\begin{enumerate}
    \item $n_{2k} \notin \{n_1, \dots, n_k\}$.
    (This implies, but is not equivalent to, $n_{2k}\in \mathcal{U}$.)
    
    
    
    \item If $u\in \mathcal{U}$, then $\tau_{2k}(u)\le \tau_{2k}(n_{2k})$.
\end{enumerate}
Then \eqref{SET: cross solutions} has size at least $\#T^\star_k(I_1,I_2)$ and we claim that \eqref{SET: cross solutions} has size at most
\[ \le \#T^\star_k(I_1,I_2) + 2k\cdot \#N^\star_k(I_1,I_2) + 2k\cdot \#N^\star_k(I_2,I_1).\]
First note that for each element $\mathfrak{n}$ of \eqref{SET: cross solutions} lying outside of $T^\star_k(I_1,I_2)$, there exist $v\in \mathcal{U}$ and $(a, b, c) \in \{m,n\} \times \{0,k\} \times [k]$, with $\tau_{2k}(v) = \max_{u\in \mathcal{U}} \tau_{2k}(u)$, such that $a_{b+c}=v$ and $a_{b+c}\notin \{a_{(k-b)+i}: i\in [k]\}$;
the existence of $v$ with $\tau_{2k}(v) = \max_{u\in \mathcal{U}} \tau_{2k}(u)$ follows from the fact that $\mathcal{U}\ne \emptyset$,
and the existence of  $(a,b,c)$ then follows from the definition of $\mathcal{U}$.
The claim then follows from the definitions of $N^\star_k(I_1,I_2)$ and $N^\star_k(I_2,I_1)$, upon summing over all possibilities for $a$, $b$, $c$.

It follows that in  expectation over $x_1,x_2\in [X]$, the size of \eqref{SET: cross solutions} is
\begin{equation}
\label{EQN:recursive-estimate}
\E_{x_1,x_2} \#T^\star_k(I_1,I_2) + O(2k\cdot \E_{x_1,x_2} \#N^\star_k(I_1,I_2)).
\end{equation}

The projection $I_1^{2k}\times I_2^{2k}\ni (n_1,\dots, n_{2k}; m_1,\dots, m_{2k})\mapsto (n_1,\dots,n_k;m_1,\dots,m_k;n_{2k}) \in I_1^k \times I_2^k \times I_1$,
i.e.~``forgetting'' $n_{k+1},\dots,n_{2k-1},m_{k+1},\dots,m_{2k}$, defines a map $\pi$ from $N^\star_k(I_1,I_2)$ to the set
\begin{equation*}
\begin{split}
D^\star_k(I_1,I_2)
:= \{(n_1,\dots,n_k;m_1,\dots,m_k;n_{2k}) \in I_1^k \times I_2^k \times I_1:{}
&n_{2k}|n_1\cdots n_km_1\cdots m_k, \\
&n_{2k} \notin \{n_1, \dots, n_k\}\}.
\end{split}
\end{equation*}
We now bound the fibers of $\pi$.
Suppose $(n_1,\dots,n_{2k};m_1,\dots,m_{2k})\in N^\star_k(I_1,I_2)$.
Let $S_1 := \{i\in \{k+1,\dots,2k\}: n_i\notin \mathcal{U}\}$
and $S_2 := \{j\in \{k+1,\dots,2k\}: m_j\notin \mathcal{U}\}$,
and let
\[
z := \prod_{i\in \{k+1,\dots,2k\}\setminus S_1} n_i
\prod_{j\in \{k+1,\dots,2k\}\setminus S_2} m_j
= \frac{n_1\cdots n_km_1\cdots m_k}{\prod_{i\in S_1} n_i \prod_{j\in S_2} m_j}.
\]
Then the following hold:
\begin{itemize}
    \item $n_i\in \{n_1,\dots,n_k\}$ for all $i\in S_1$,
    and $m_j\in \{m_1,\dots,m_k\}$ for all $j\in S_2$;
    \item $z$ depends only on $n_1,\dots,n_k,m_1,\dots,m_k,(n_i)_{i\in S_1},(m_j)_{j\in S_2}$;
    and
    \item $\tau_{2k-|S_1|-|S_2|}(z)
    \le \tau_{2k}(z)\le \tau_{2k}(n_{2k})^{2k-|S_1|-|S_2|}$.
    (The upper bound on $\tau_{2k}(z)$ arises as follows:
    since $z$ is the product of $2k-|S_1|-|S_2|$ elements $u_l$ of $\mathcal{U}$, we have an upper bound $\le \prod_{1\le l \le 2k- |S_1|-|S_2|} \tau_{2k}(u_l)$, which is $ \le \prod_{1\le l \le 2k- |S_1|-|S_2|} \tau_{2k}(n_{2k}) $.)
\end{itemize}
Therefore, the fiber of $\pi$ over $(n_1,\dots,n_k;m_1,\dots,m_k;n_{2k})\in D^\star_k(I_1,I_2)$ has size at most
\begin{equation}
\label{EXPR:combinatorial-symmetry-trick-factor}
\sum_{S_1,S_2\subseteq \{k+1,\dots,2k\}} k^{|S_1|} \cdot k^{|S_2|} \cdot \tau_{2k}(n_{2k})^{2k-|S_1|-|S_2|}
= \sum_{0\le l\le 2k} \binom{2k}{l} k^{l} \tau_{2k}(n_{2k})^{2k-l},
\end{equation}
where each $S_t$ ($1\le t\le 2$) runs through all possible subsets of $\{k+1,\dots,2k\}$.

The right-hand side of \eqref{EXPR:combinatorial-symmetry-trick-factor} equals $(k + \tau_{2k}(n_{2k}))^{2k}\le (k+1)^{2k} \tau_{2k}(n_{2k})^{2k}$,
so upon summing over $(n_1,\dots,n_k;m_1,\dots,m_k;n_{2k})\in D^\star_k(I_1,I_2)$, we conclude that
\begin{equation}
\label{INEQ:reduce-to-divisor-type-problem}
\#N^\star_k(I_1,I_2)
\le (k+1)^{2k} \sum_{(n_1,\dots,n_k;m_1,\dots,m_k;n_{2k})\in D^\star_k(I_1,I_2)} \tau_{2k}(n_{2k})^{2k}.
\end{equation}

We use \eqref{INEQ:reduce-to-divisor-type-problem} to bound $\E_{x_2} \#N^\star_k(I_1,I_2)$.
Note that if $(n_1,\dots,n_k;m_1,\dots,m_k;n_{2k})\in D^\star_k(I_1,I_2)$ and $y:=n_{2k}$ (so that in particular, $m_i-x_2\in [H]$ and $n_i-y\in [-H,H]\setminus \{0\}$ for all $i\in [k]$),
then $y\in (x_1,x_1+H]$ and
\[ (x_2,m_1-x_2,\dots,m_k-x_2,n_1-y,\dots,n_k-y)\in B_k(X,H;y), \]
in the notation of Lemma~\ref{Lem: yth}.
Therefore, summing \eqref{INEQ:reduce-to-divisor-type-problem} over $x_2\in [X]$ gives the inequality

\begin{equation*}
X\cdot \E_{x_2} \#N^\star_k(I_1,I_2)
= \sum_{x_2\in [X]} \#N^\star_k(I_1,I_2)
\ll_k \sum_{y\in (x_1,x_1+H]} \tau_{2k}(y)^{2k} \cdot |B_k(X,H;y)|.
\end{equation*}
 We next apply Lemma~\ref{Lem: yth} to give an upper bound on $|B_k(X,H;y)|$, which leads to 
\[X\cdot \E_{x_2} \#N^\star_k(I_1,I_2) \ll_k \sum_{y\in (x_1,x_1+H]} \tau_{2k}(y)^{2k}  O(H)^{2k} \cdot \tau_2(y)\tau_k(y)^2\cdot O(1 + X/\rad_k(y)).  \]
Average over $x_1$ by using Lemma~\ref{Lem: average yx}, to get
\begin{equation}
\label{EQN:N'-average-estimate}
\E_{x_1,x_2} \#N^\star_k(I_1,I_2)
\ll_k
O(H)^{2k}\cdot H\cdot X^{-1} (\log X)^{\mathcal{C}_k}.
\end{equation}
This is $\ll_k H^{2k} (\log X)^{-\mathcal{C}_k}$ in our range of $H$. 
By \eqref{INEQ:trivial-type-solution-estimate} and \eqref{EQN:N'-average-estimate}, quantity~\eqref{EQN:recursive-estimate} is $k!^2H^{2k} + O_k(H^{2k-1}) + O_k(H^{2k} (\log X)^{-\mathcal{C}_k})$.
Lemma~\ref{LEM:avg-cross-count} follows.
\end{proof}

\begin{proof}
[Proof of Theorem~\ref{thm: Steinhaus}]
Assume $A$ is large and $H\ll X(\log X)^{-C_k}$, where $C_k = A k^{A k^{A k}}$.
Let $C := 10$, so that the quantity $E(k)=2k^2+2$ in Theorem~\ref{thm: high moments} satisfies
\begin{equation}
\label{EQN:convenient-choice-of-C}
E(k)\le 4Ck^2,
\quad E(k+\ell)\le 5Ck^2\text{ for all $1\le \ell\le k-1$}.
\end{equation}
(This is just for uniform notational convenience.)

\noindent
(a). We prove \eqref{eqn: 2k},
a bound on the quantity
\begin{equation}\label{eqn: L2}
    \E_f \bigl( \E_x |A_H(f, x)|^{2k} -k!\bigr)^{\!2},
\end{equation}
where $A_H(f,x)$ is defined as in \eqref{EQN:notation-A_H(f,x)-for-normalized-partial-sum}.
By expanding the square, we can rewrite \eqref{eqn: L2} as
\begin{equation}\label{eqn: 2}
 \E_f \bigl( \E_x |A_H(f, x)|^{2k} \bigr)^{\!2} -2 k! \E_f \E_x |A_H(f, x)|^{2k}  + k!^{2}.    
\end{equation}

The subtracted term in \eqref{eqn: 2} can be computed by switching the summation: it equals
\begin{equation}\label{eqn: 1}
  -2 k! \E_x  \E_f |A_H(f, x)|^{2k}.
 \end{equation}
We estimate \eqref{eqn: 1} by a combination of trivial bounds (based on the divisor bound \eqref{INEQ:k-fold-divisor-bound}) and the moment estimate in Theorem~\ref{thm: high moments}.
We split the sum $\E_x \E_f |A_H(f, x)|^{2k}$ into two ranges, and apply Theorem~\ref{thm: high moments} and \eqref{EQN:convenient-choice-of-C}, to get that
$X\cdot \E_x \E_f |A_H(f, x)|^{2k}$ equals
\begin{equation*}
\begin{split}
&\sum_{1  \le x \le  H(\log X)^{5Ck^{2}} } \E_f |A_H(f, x)|^{2k}
+ \sum_{ H(\log X)^{5Ck^{2}}  \le x \le X} \E_f |A_H(f, x)|^{2k} \\
&= \sum_{1\le x \le H(\log X)^{5Ck^{2}}} O\Bigl((\log X)^{4 Ck^{2}}\Bigr)
+ \sum_{H(\log X)^{5Ck^{2}}  \le x \le X} \Bigl(k! + O\Bigl((\log X)^{-Ck^{2}}\Bigr)\Bigr).
\end{split}
\end{equation*}
Upon summing over both ranges of $x$ above, it follows that $\E_x \E_f |A_H(f, x)|^{2k} = k! + o_{X\to +\infty}(1)$ in the given range of $H$
(provided $A$ is large enough that $C_k\ge 10Ck^{2}$).

We next focus on the first term in \eqref{eqn: 2}.
We expand out the expression and switch the expectations to get that the first term in \eqref{eqn: 2} is
\begin{equation}\label{eqn: cross term}
    \E_{x_1}\E_{x_2} \E_f |A_H(f, x_1)|^{2k}|A_H(f, x_2)|^{2k}.
\end{equation}
Now we use orthogonality and apply Lemma~\ref{LEM:avg-cross-count} to see that \eqref{eqn: cross term}
is $k!^{2}+o_{X\to +\infty}(1)$ in the given range of $H$
(if $A$ is sufficiently large).
Combining the above together,  \eqref{eqn: 2k} follows. 

\noindent
(b). We prove \eqref{eqn: kl}, a bound on the quantity (in the notation $A_H(f,x)$ from \eqref{EQN:notation-A_H(f,x)-for-normalized-partial-sum})
\begin{equation}\label{eqn: odd-moment}
\E_f \Bigl|\E_x \Bigl[A_H(f,x)^k\, \overline{A_H(f,x)^\ell}\Bigr]\Bigr|^{2}
= X^{-2} \sum_{x_1,x_2\in [X]} \mathcal{B}_H(x_1,x_2),
\end{equation}
where $1\le \ell \le k-1$ and $\mathcal{B}_H(x_1,x_2) :=
\E_f A_H(f,x_1)^k \overline{A_H(f,x_1)^\ell} \overline{A_H(f,x_2)^k} A_H(f,x_2)^\ell$.
This is the same as counting solutions to 
\begin{equation}\label{eqn: product}
  n_1n_2\cdots n_k\cdot m_1m_2\cdots m_{\ell} = n_{k+1}n_{k+2}\cdots n_{k+\ell}\cdot m_{\ell+1}m_{\ell+2}\cdots m_{\ell+k},  
\end{equation}
where $x_1\le n_i\le x_1 + H$ and $x_2 \le m_i \le x_2 +H$ for all $1\le i \le k+\ell$. Suppose that $x_1\ge x_2$. 
The left hand side in \eqref{eqn: product} is  
\[ n_1n_2\cdots n_k\cdot m_1m_2\cdots m_{\ell} \ge x_1^{k}x_2^{\ell},\] while the right hand side in \eqref{eqn: product} is 
\[n_{k+1}n_{k+2}\cdots n_{k+\ell}\cdot m_{\ell+1}m_{\ell+2}\cdots m_{\ell+k} \le (x_1+H)^{\ell} (x_2+H)^{k} \le x_1^{\ell}x_2^{k} (1+ \frac{H}{x_2})^{k+\ell}. \]
To make them equal, we must have 
\[x_1/x_2 \le (x_1/x_2)^{k-\ell} \le (1+\frac{H}{x_2})^{2k}, \]
which implies that (under the assumption $Hk = o(x_2)$)
\[x_2 \le x_1 \le x_2 + O(kH). \]
From now on, we only need to consider two cases:
\begin{enumerate}
    \item $\min(x_1, x_2) \ll kH$;
    \item $|x_1-x_2| = O(kH)$.
\end{enumerate}
We first deal with case (1): $\min(x_1, x_2)\ll kH$.
By the Cauchy-Schwarz inequality,
\[ |\mathcal{B}_H(x_1,x_2)|^2
\ll_{k} (\E_f |A_H(f,x_1)|^{2(k+\ell)}) \cdot (\E_f |A_H(f,x_2)|^{2(k+\ell)}). \]
Theorem~\ref{thm: high moments} and \eqref{EQN:convenient-choice-of-C} imply that $\mathcal{B}_H(x_1,x_2) \ll_k (\log X)^{5Ck^2}$.
So the contribution to \eqref{eqn: odd-moment} over all pairs $(x_1, x_2)$ with
$\min\{x_1, x_2\}\le H$ is at most $\ll 1/(\log X)^{C_k - 10Ck^{2}}$, which is $o_{X\to +\infty}(1)$ by our choice of $C_k$. 

We next deal with case (2): $|x_1-x_2|=O(kH)$. Assume $x_2<x_1$. Then all the variables $m_i, n_j$ are in $[x_2, x_1+H]$, so by Theorem~\ref{thm: high moments} and \eqref{EQN:convenient-choice-of-C}, the the contribution in this case to \eqref{eqn: product} over $x_1,x_2$ is at most
\[ \ll_k XH(\log X)^{10 Ck^{2}} \cdot H^{k+\ell} (\log X)^{ 5Ck^{2}}
\ll X^{2}(\log X)^{15Ck^{2}-C_k} \cdot H^{k+\ell}
= X^{2}\cdot o_{X\to +\infty}(H^{k+\ell}), \]
by our choice of $C_k$. After dividing by $X^2 H^{k+\ell}$, we see that the total contribution to \eqref{eqn: odd-moment} in this case is $o_{X\to +\infty}(1)$.


Combining the two cases above, we obtain the desired \eqref{eqn: kl}.
\end{proof}

\section{Concluding remarks}\label{sec: end}

Recall the exponent $E'(k)$ defined after Theorem~\ref{thm: high moments}.
As we mentioned before, Theorem~\ref{thm: high moments} implies $E'(k)\le E(k)=2k^2+2$, and the truth may be that $E'(k)$ grows linearly in $k$.
The method used in \cite{Breteche,HarperHelson,HeapLind} may help to extend Theorem~\ref{thm: high moments}, i.e.~to improve on the bound $E'(k)\le E(k)$.
Alternatively, one might try to improve on Theorem~\ref{thm: high moments} via Hooley's $\Delta$-function technique from \cite{HooleyDelta}; note that $(x,x+H]\subseteq (x, ex]$ if $H\le x$.

The true threshold in the problem studied in Theorem~\ref{thm: main} is more delicate. A closely related problem is to understand for what range of $H$, as $X\to +\infty$,  the following holds:
\begin{equation}\label{eqn: CLT}
   \frac{1}{\sqrt{H}} \sum_{X<n\le X+H}f(n) \xrightarrow{d} \mathcal{CN}(0,1), 
\end{equation}
where $f$ is a Steinhaus random multiplicative function over the short interval $(X, X+H]$.  In contrast to the problem we studied in this paper, $X$ is first fixed in \eqref{eqn: CLT} and the random multiplicative function $f$ varies. For this question, it is known that \cite{SoundXu} if $H\to +\infty$ and $H\ll X/(\log X)^{2\log 2 -1 +\ee}$, then such a central limit theorem holds.
In the other direction, by using Harper's remarkable results and methods in \cite{HarperLow} one may be able to show that  
\begin{equation}\label{eqn: short}
   \E_f \biggl|\frac{1}{\sqrt{H}} \sum_{X<n \le X+H} f(n)\biggr| = o_{X \to +\infty}(1), ~\text{if}~ H\gg \frac{X}{\exp((\log \log X)^{1/2-\ee})}. 
\end{equation}
(see \cite{SoundXu} for more discussions). Thus, in the above range of $H$, the $\sqrt{H}$-normalized partial sums do not have Gaussian limiting distribution. It would be interesting to know if another choice of normalization would lead to a Gaussian distribution. Now we return to the question we studied in Theorem~\ref{thm: main}. We established ``typical Gaussian behavior'' over a range of the form $H\ll \frac{X}{(\log X)^{W(X)}} = \frac{X}{\exp(W(X)\log\log{X})}$ (where $H\to +\infty$). It seems that to extend the range of $H$ so that such a Gaussian behavior holds, significant new ideas would be needed. 
It would be interesting to understand the whole story for all ranges of $H$, for both the question studied in Theorem~\ref{thm: main} and that in \eqref{eqn: CLT}.

	\bibliographystyle{abbrv}
	\bibliography{CLT}{}
\end{document}